\numberwithin{equation}{section}
\newtheoremstyle{plain}
  {\medskipamount}   
  {\medskipamount}   
  {\slshape}  
  {0pt}       
  {\bfseries} 
  {.}         
  {5pt plus 1pt minus 1pt} 
  {}          
\theoremstyle{plain}
\newtheorem{thm}{Theorem}[section]
\newtheorem{lem}[thm]{Lemma}
\newtheorem{prop}[thm]{Proposition}
\newtheorem{cor}[thm]{Corollary}
\newtheorem{quest}{Question}
\theoremstyle{definition}
\newtheorem{defn}[thm]{Definition}
\theoremstyle{remark}
\newtheorem*{rem}{Remark}
\newcommand{\NN}{\mathbb{N}}
\newcommand{\ZZ}{\mathbb{Z}}
\newcommand{\RR}{\mathbb{R}}
\newcommand{\CC}{\mathbb{C}}
\let\Re\relax
\DeclareMathOperator{\Re}{Re}
\DeclareMathOperator*{\Res}{Res}
\begin{document}

\title{Dirichlet series associated to sum-of-digits functions}
\author{Corey Everlove}

\thanks{Work partially supported by NSF grant DMS-1701576.}

\date{\today}

\maketitle

\begin{abstract}
We study the Dirichlet series $F_b(s)=\sum_{n=1}^\infty d_b(n)n^{-s}$, where $d_b(n)$ is the sum of the base-$b$ digits of the integer $n$, and $G_b(s)=\sum_{n=1}^\infty S_b(n)n^{-s}$, where $S_b(n)=\sum_{m=1}^{n-1}d_b(m)$ is the summatory function of $d_b(n)$. We show that $F_b(s)$ and $G_b(s)$ have continuations to the plane $\CC$ as meromorphic functions of order at least 2, determine the locations of all poles, and give explicit formulas for the residues at the poles. We give a continuous interpolation of the sum-of-digits functions $d_b$ and $S_b$ to non-integer bases using a formula of Delange, and show that the associated Dirichlet series have a meromorphic continuation at least one unit left of their abscissa of absolute convergence.
\end{abstract}

\section{Introduction}

There has been a great deal of study of properties of the radix expansions to an integer base $b\geq 2$ of integers $n$. For each integer base $b\geq 2$, every positive integer $n$ has a unique base-$b$ expansion
\begin{equation}
n = \sum_{i \geq 0} \delta_{b,i}(n)b^i
\end{equation}
with digits $\delta_{b,i}\in\{0,1,\dotsc,b-1\}$ given by
\begin{equation}
\delta_{b,i} = \Bigl\lfloor \frac{n}{b^i} \Bigr\rfloor - b \Bigl\lfloor \frac{n}{b^{i+1}} \Bigr\rfloor.
\end{equation}
This paper considers two summatory functions of base $b$ digits of $n$:

\begin{enumerate}
\item The \emph{base-$b$ sum-of-digits function} $d_b(n)$ is
\begin{equation}
d_b(n) = \sum_{i\geq 0} \delta_{b,i}(n).
\end{equation}

\item The \emph{(base $b$) cumulative sum-of-digits function} $S_b(n)$ is
\begin{equation}
S_b(n) = \sum_{m=1}^{n-1} d_b(m).
\end{equation}
We follow here the convention of previous authors (including \cite{delange-75} and \cite{flajolet-94}), with the sum defining $S_b(n)$ running to $n-1$ instead of $n$.
\end{enumerate}

We associate to the functions $d_b(n)$ and $S_b(n)$ the Dirichlet series generating functions
\begin{equation}
F_b(s) = \sum_{n=1}^\infty \frac{d_b(n)}{n^s}
\end{equation}
and
\begin{equation}
G_b(s) = \sum_{n=1}^\infty \frac{S_b(n)}{n^s}.
\end{equation}
These Dirichlet series have abscissa of convergence $\Re(s)=1$ and $\Re(s)=2$, respectively.

This paper studies the problem of the meromorphic continuation to $\CC$ of Dirichlet series associated 
to the base-$b$ digit sums $d_b(n)$ and $S_b(n)$. 
Here we obtain the meromorphic continuation  and determine
its exact pole and residue structure. 
The pole structure contains half of a two-dimensional lattice
and the residues involve Bernoulli numbers and values of the Riemann zeta function on
the line $\Re(s)=0$. A  meromorphic continuation of these functions was  previously obtained in 
the thesis of Dumas \cite{dumas-thesis} by a different method, which specified a
half-lattice containing all the poles but did not determine the residues; in fact infinitely many
of the residues on his possible pole set vanish.

The asymptotics of $S_b(n)$ have been extensively studied, see Section \ref{sec-previous-work}. 
We mention particularly  work of Delange \cite{delange-75}, 
given below as Theorem \ref{thm-delange},
which gives an exact formula for $S_b(n)$ in terms of a continuous nondifferentiable function with Fourier coefficients involving values of the Riemann zeta function on the imaginary axis.
 Using an interpolation of Delange's formula we formulate a 
continuous interpolation of $S_b(n)$ in the base parameter $b$,
permitting definitions of  $d_\beta(n)$ and $S_\beta(n)$ for a real parameter $\beta>1$. We obtain a meromorphic continuation of the associated Dirichlet series $F_{\beta}(s)$
and $G_{\beta}(s)$ to the half-planes $\Re(s)>0$ and $\Re(s)>1$, respectively. We note apparent
fractal properties of $d_{\beta}(n)$ as $\beta$ is varied. 

\subsection{Results}

 Our first results concern the meromorphic continuation of the functions $F_b(s)$ and $G_b(s)$ to the entire complex plane $\CC$.

\begin{thm}\label{thm-db}
For each integer base $b\geq 2$, the function $F_b(s) = \sum_{n=1}^\infty d_b(n)n^{-s}$ has a meromorphic continuation to $\CC$. The poles of $F_b(s)$ consist of a double pole at $s=1$ with Laurent expansion beginning
\begin{equation}
F_b(s)=\frac{b-1}{2\log b}(s-1)^{-2} + \biggl( \frac{b-1}{2\log b} \log(2\pi) - \frac{b+1}{4}\biggr)(s-1)^{-1} + O(1),
\end{equation}
simple poles at each other point $s=1+2\pi i m / \log b$ with $m\in \ZZ$ ($m\neq 0$) with residue
\begin{equation}
\Res\biggl( F_b(s) , s=1+\frac{2\pi i m}{\log b} \biggr) = - \frac{b-1}{2\pi i m} \zeta\biggl(\frac{2\pi i m}{\log b}\biggr),
\end{equation}
and simple poles at each point $s=1-k+2\pi i m /\log b$ with $k=1$ or $k\geq 2$ an even integer and with $m\in\ZZ$, with residue
\begin{equation}
\Res\biggl( F_b(s) , s=1-k+\frac{2\pi i m}{\log b} \biggr) =  (-1)^{k+1}\frac{b-1}{\log b}\zeta\biggl(\frac{2\pi i m}{\log b}\biggr)\frac{B_k}{k!} \prod_{j=1}^{k-1} \biggl(\frac{2\pi i m}{\log b} - j\biggr)
\end{equation}
where $B_k$ is the $k$th Bernoulli number.
\end{thm}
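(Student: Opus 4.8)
The plan is to express $F_b(s)$ through Hurwitz zeta functions, obtain the continuation from a self-referential functional equation, read off the poles from an explicit exponential factor, and extract the residues by combining Delange's formula with that functional equation. I would begin from the identity $d_b(n)=(b-1)\sum_{j\ge 1}\{n/b^j\}$, which follows by writing each digit as $\delta_{b,i}(n)=b\{n/b^{i+1}\}-\{n/b^i\}$ and telescoping (using $\{n\}=0$). For fixed $j$ the function $\{n/b^j\}$ is periodic in $n$ of period $b^j$, so summing against $n^{-s}$ produces Hurwitz zeta functions, giving for $\Re(s)>1$
\[
F_b(s)=(b-1)\sum_{j\ge 1}b^{-j(s+1)}\sum_{r=1}^{b^j-1}r\,\zeta\Bigl(s,\tfrac{r}{b^j}\Bigr).
\]

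For the continuation I would split the defining series according to $n=bm+r$ with $0\le r<b$ and use the recursion $d_b(bm+r)=d_b(m)+r$. Expanding $\sum_{r=0}^{b-1}(bm+r)^{-s}$ by the binomial series and writing $\sigma_k(b)=\sum_{r=0}^{b-1}r^k$ yields the exact functional equation
\[
(1-b^{1-s})F_b(s)=A_b(s)+b^{-s}\sum_{k\ge 1}\binom{-s}{k}\frac{\sigma_k(b)}{b^k}\,F_b(s+k),\qquad A_b(s)=b^{-s}\sum_{r=1}^{b-1}r\,\zeta\Bigl(s,\tfrac{r}{b}\Bigr).
\]
Since $\sigma_k(b)/b^k$ decays geometrically and the shifted arguments $s+k$ move to the right, the right-hand side is meromorphic wherever $F_b$ is already known one unit further right; iterating bootstraps the continuation to all of $\CC$. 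The zeros of $1-b^{1-s}$, which lie exactly at $s=1+2\pi i m/\log b$ and are simple, furnish the poles on the line $\Re(s)=1$, and the shifted terms $F_b(s+k)$ propagate these poles to the candidate lines $\Re(s)=1-k$. At $s=1$ the zero of $1-b^{1-s}$ meets the simple pole of $A_b(s)$ (of residue $b^{-1}\sum_r r=(b-1)/2$), creating the double pole; expanding $1/(1-b^{1-s})$ and $A_b$ through order $(s-1)^0$ reproduces the stated leading Laurent coefficients, the $\log(2\pi)$ arising from the constant terms $-\psi(r/b)$ of the Hurwitz zeta functions via Gauss's digamma formula.

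To pin down the residues I would import Delange's Theorem~\ref{thm-delange} through the Mellin representation $F_b(s)=s\int_1^\infty\bigl(\sum_{n\le x}d_b(n)\bigr)x^{-s-1}\,dx$. Delange's exact formula writes $\sum_{n\le x}d_b(n)$ as $\tfrac{b-1}{2}x\log_b x$ plus $x$ times a periodic function whose Fourier coefficients are built from $\zeta(2\pi i m/\log b)$; matching the oscillating terms $x^{1+2\pi i m/\log b}$ against the Mellin kernel produces precisely the simple poles at $s=1+2\pi i m/\log b$ with residue $-\tfrac{b-1}{2\pi i m}\zeta(2\pi i m/\log b)$, and confirms the $(s-1)^{-1}$ coefficient. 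The residues on the lower lines then follow by feeding these into the functional equation: each pole of a term $F_b(s+k)$ at an upper point contributes a pole of $F_b$ at $s=1-k+2\pi i m/\log b$, carrying the same factor $\zeta(2\pi i m/\log b)$ and weighted by $\binom{-s}{k}\sigma_k(b)/b^k$ and by the regular value of $1/(1-b^{1-s})$ there.

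The main obstacle is the residue bookkeeping on the lower lines. A pole at $\Re(s)=1-k$ is reached along many paths through the recursion, so its residue is a resummation in which the power sums $\sigma_k(b)$ must be shown to collapse to a single Bernoulli number $B_k/k!$ together with the product $\prod_{j=1}^{k-1}(2\pi i m/\log b-j)$; equivalently, one can derive the full Euler--Maclaurin expansion of $\sum_{n\le x}d_b(n)$ beyond Delange's two terms and read the residues off the lower-order periodic terms. Because $B_k=0$ for odd $k\ge 3$, these residues vanish and the corresponding candidate poles are removable, leaving poles only for $k=1$ and even $k\ge 2$ as claimed. The delicate point throughout is that the naive term-by-term estimates generate spurious poles off the true lattice (for instance a phantom pole at $s=2$ coming from a $\zeta(s-1)$ factor), and the analysis must exhibit the exact cancellations that remove them while preserving the correct $\zeta(2\pi i m/\log b)$ normalization.
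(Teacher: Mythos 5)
Your reduction to the self-referential functional equation $(1-b^{1-s})F_b(s)=A_b(s)+b^{-s}\sum_{k\ge 1}\binom{-s}{k}\sigma_k(b)b^{-k}F_b(s+k)$ is correct and does bootstrap a meromorphic continuation to $\CC$, and your Delange--Mellin argument can be made to give the line $\Re(s)=1$ (modulo a point you gloss over: Delange's formula holds at integers, so replacing the step function $\sum_{n\le x}d_b(n)$ by $\frac{b-1}{2\log b}x\log x+h_b(\log_b x)\,x$ inside the Mellin integral requires a bound on the modulus of continuity of the nowhere-differentiable function $h_b$ to keep the error term holomorphic to the left of $\Re(s)=1$; similarly, your claim that the $\log(2\pi)$ in the $(s-1)^{-1}$ coefficient comes only from the digamma constants of $A_b$ ignores the contribution of the infinite sum $\sum_{k\ge1}\binom{-1}{k}\sigma_k(b)b^{-k-1}F_b(1+k)$ to that same coefficient). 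The genuine gap is on the lower lines, which is exactly where the theorem's content lies. Your recursion places candidate poles at every $s=1-k+2\pi i m/\log b$ with $k\ge 1$; the theorem asserts residues involving $B_k/k!$ and $\prod_{j=1}^{k-1}(2\pi i m/\log b - j)$, and, crucially, that the residues on all odd lines $k\ge 3$ vanish. You acknowledge this yourself (``must be shown to collapse to a single Bernoulli number,'' ``the main obstacle'') but never exhibit the collapse: the residue at depth $k$ is a resummation over all chains of shifts $k=j_1+\cdots+j_r$ of products of binomial coefficients, power sums $\sigma_j(b)$, and regular values of $1/(1-b^{1-s})$, and no identity reducing this to $B_k/k!$ is stated, let alone proved. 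Invoking ``$B_k=0$ for odd $k\ge3$'' to remove the odd lines presupposes precisely the unproven collapse. This is not a routine detail: it is exactly the difference between the theorem and Dumas's earlier result cited in the paper, which by essentially your method confined the poles to a half-lattice but could not evaluate the residues or detect that infinitely many of them vanish.

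The paper avoids this bookkeeping with one structural move you could adopt: difference the coefficients first. The series $Z_b(s)=\sum_n\bigl(d_b(n)-d_b(n-1)\bigr)n^{-s}$ has the exact closed form $\frac{b^s-b}{b^s-1}\zeta(s)$, whose poles and residues are elementary. Writing $\Gamma(s)F_b(s)=\int_0^\infty \frac{1}{1-e^{-x}}p(e^{-x})x^{s-1}\,dx$ with $p(x)=\sum_n\bigl(d_b(n)-d_b(n-1)\bigr)x^n$ and expanding the kernel via $\frac{x}{1-e^{-x}}=\sum_k \frac{(-1)^kB_k}{k!}x^k$ yields the finite exact expansion
\begin{equation*}
F_b(s)=\sum_{k=0}^{K}\frac{(-1)^kB_k}{k!}\,\frac{\Gamma(s-1+k)}{\Gamma(s)}\,Z_b(s-1+k)+R_K(s),
\end{equation*}
with $R_K$ holomorphic for $\Re(s)>1-K$. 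Here each Bernoulli number enters linearly, one per horizontal line, so the residue formulas and the absence of poles on the odd lines $k\ge 3$ are immediate, with no resummation and no need for Delange's theorem at all. To complete your route you would have to prove the compositional identity equating your resummation with $B_k/k!\prod_{j=1}^{k-1}(2\pi i m/\log b-j)$, e.g.\ via the extended Euler--Maclaurin expansion you allude to; that is essentially the paper's computation carried out in harder notation.
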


Theorem \ref{thm-db} is proved by first considering the Dirichlet series $\sum \bigl(d_b(n)-d_b(n-1)\bigr) n^{-s}$ and then exploiting a relation between power series and Dirichlet series to recover $F_b(s)$. The proof is presented in Section \ref{sec-mero-cont}.

The meromorphic continuation of Dirichlet series attached to $b$-regular sequences, of which our Dirichlet series $F_b(s)$ is a particular example, was studied by Dumas in his 1993 thesis \cite{dumas-thesis}; this work also showed that the poles of $F_b(s)$ must be contained in a certain half-lattice, strictly larger than the half-lattice here.

A similar method allows us to meromorphically continue the series $G_b(s)$ to the complex plane.

\begin{thm}\label{thm-sb}
For each integer $b\geq 2$, the function $G_b(s)=\sum_{n=1}^\infty S_b(n)n^{-s}$ has a meromorphic continuation to $\CC$. The poles of $G_b(s)$ consist of a double pole at $s=2$ with Laurent expansion
\begin{equation}
G_b(s) = \frac{b-1}{2\log b}(s-2)^{-2} + \biggl(\frac{b-1}{2\log b}\bigl(\log(2\pi)-1\bigr)-\frac{b+1}{4}\biggr)(s-2)^{-1} + O(1),
\end{equation}
a simple pole at $s=1$ with residue
\begin{equation}
\Res(G_b(s),s=1) = \frac{b+1}{12},
\end{equation}
simple poles at $s=2 + 2\pi i m / \log b$ with $m\in\ZZ$ ($m\neq 0$) with residue
\begin{equation}
\Res\biggl( G_b(s) , s= 2 + \frac{2\pi i m}{\log b} \biggr) = - \frac{b-1}{2\pi i m}\biggl(1+\frac{2\pi i m}{\log b}\biggr)^{-1} \zeta\biggl(\frac{2\pi i m}{\log b}\biggr)
\end{equation}
and simple poles at point $s=2-k + 2\pi i m / \log b$ with $k\geq 2$ an even integer and $m\in\ZZ$ with residue
\begin{equation}
\Res\biggl( G_b(s) , s= 2 - k + \frac{2\pi i m}{\log b} \biggr) = \frac{b-1}{\log b} \zeta\biggl(\frac{2\pi i m}{\log b}\biggr) \biggl(\frac{B_k}{k(k-2)!} \biggr) \prod_{j=1}^{k-2}\biggl(\frac{2\pi i m}{\log b} - j\biggr).
\end{equation}
\end{thm}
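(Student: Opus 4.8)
The plan is to exploit that $S_b$ is, up to the standard off-by-one, the summatory function of $d_b$, and to realize $G_b$ as the image of $F_b$ under the same ``summation'' operation on Dirichlet series that Section~\ref{sec-mero-cont} uses to pass from a difference series to $F_b$. Writing $A(x)=\sum_{n\ge1}d_b(n)x^n$, the identity $\sum_{n\ge1}S_b(n)x^n=\frac{x}{1-x}A(x)$ together with the Mellin representation $\Gamma(s)G_b(s)=\int_0^\infty \frac{A(e^{-t})}{e^t-1}\,t^{s-1}\,dt$, valid for $\Re(s)>2$, and the Bernoulli expansion $\frac{1}{e^t-1}=\frac1t-\frac12+\sum_{k\ge1}\frac{B_{2k}}{(2k)!}t^{2k-1}$, yields the relation
\[
G_b(s)=\frac{F_b(s-1)}{s-1}-\frac{F_b(s)}{2}+\sum_{k\ge1}\frac{B_{2k}}{(2k)!}\frac{\Gamma(s+2k-1)}{\Gamma(s)}F_b(s+2k-1).
\]
I would make this rigorous by truncating the Bernoulli expansion at order $N$ and carrying the remainder integral, which is holomorphic in a half-plane whose left boundary recedes to $-\infty$ as $N\to\infty$. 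Since each shifted term $F_b(s+2k-1)$ is meromorphic on $\CC$ by Theorem~\ref{thm-db} and has poles only in $\Re(w)\le1$, on any fixed vertical strip all but finitely many terms are holomorphic; hence the right-hand side is meromorphic on $\CC$ and furnishes the continuation of $G_b$.

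With the relation in hand, most of the pole data is read off termwise. The dominant term $F_b(s-1)/(s-1)$ transports the double pole of $F_b$ at $s=1$ to a double pole of $G_b$ at $s=2$, and the poles of $F_b$ at $1+\omega$ and at $1-k+\omega$ (writing $\omega=2\pi i m/\log b$) to poles of $G_b$ at $2+\omega$ and $2-k+\omega$; the residues follow by multiplying $\Res(F_b,\cdot)$ by the regular factor $1/(s-1)$ and using $\frac{1}{\omega-k+1}\prod_{j=1}^{k-1}(\omega-j)=\prod_{j=1}^{k-2}(\omega-j)$. This reproduces the Laurent data at $s=2$ and the residue at $s=2+\omega$ directly.

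The delicate points, and the main obstacle, are the cancellations and the bookkeeping at the remaining points. At $s=1$ both $F_b(s-1)/(s-1)$ and $-\frac12 F_b(s)$ carry double poles; these cancel exactly because $\Res(F_b,0)$ equals one-half of the leading Laurent coefficient of $F_b$ at $s=1$, leaving a simple pole whose residue $(b+1)/12$ emerges from the surviving $(s-1)^{-1}$ terms, namely the constant term of $F_b$ at $s=0$ together with the simple-pole coefficient of $F_b$ at $s=1$. At each point of odd negative real part the single polar contribution of $-\frac12 F_b(s)$ is cancelled by that of a single shifted term, so no poles occur there; this is the mechanism thinning the naive half-lattice to the one stated. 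Finally, at $s=2-k+\omega$ with $k\ge2$ even several terms contribute at once---$F_b(s-1)/(s-1)$ together with the shifts $F_b(s+2j-1)$ for $1\le j\le k/2$, plus the extra term $-\frac12 F_b(s)$ when $k=2$---and one must note that the Pochhammer prefactors $\Gamma(s+2k-1)/\Gamma(s)=s(s+1)\cdots(s+2k-2)$ vanish at the relevant integer points, which tames the shifted double pole of $F_b$ at $m=0$ so that $G_b$ has only a simple pole there. I expect the hardest step to be summing this finite collection of residues and simplifying it, via identities among the Bernoulli numbers, to the uniform closed form $\frac{b-1}{\log b}\zeta(\omega)\frac{B_k}{k(k-2)!}\prod_{j=1}^{k-2}(\omega-j)$; the common factor $\zeta(\omega)$ is automatic, since every residue of $F_b$ along the line $1-k'+\omega$ already carries it.
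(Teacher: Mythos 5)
Your proposal is correct in outline, but it takes a genuinely different route from the paper's proof---in fact it is precisely the alternative that the paper only sketches in the Remark following its proof of Theorem \ref{thm-sb}. The paper never passes through $F_b$: it writes $\sum_{n\geq 1}S_b(n)x^n=\frac{x}{(1-x)^2}p(x)$ with $p(x)=\sum_{n\geq1}\bigl(d_b(n)-d_b(n-1)\bigr)x^n$, and expands $G_b$ in shifts of $Z_b(s)=\frac{b^s-b}{b^s-1}\zeta(s)$ against the kernel $\frac{x^2e^x}{(e^x-1)^2}=1-\sum_{k\geq2}\frac{B_k}{k(k-2)!}x^k$; since each claimed pole then receives a contribution from exactly \emph{one} term, the coefficients $\frac{B_k}{k(k-2)!}$, the residue $\frac{b+1}{12}=-Z_b(-1)$ at $s=1$, and the thinning of the half-lattice all drop out with no cancellation analysis. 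Your decomposition $G_b(s)=\frac{F_b(s-1)}{s-1}-\frac12F_b(s)+\sum_{k\geq1}\frac{B_{2k}}{(2k)!}\frac{\Gamma(s+2k-1)}{\Gamma(s)}F_b(s+2k-1)$ is valid (truncation plus a remainder holomorphic in a receding half-plane makes it rigorous, as you say), and your cancellation mechanisms are the right ones: at $s=1-k'+\omega$ with $k'$ even the pole of $-\frac12F_b(s)$ is exactly killed by the single shift $j=k'/2$, and at $s=1$ the double poles cancel because $\Res(F_b,0)=\frac{b-1}{4\log b}$. However, your route needs two inputs beyond the bare statement of Theorem \ref{thm-db}, and a complete write-up must supply them. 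First, the residue at $s=1$ involves the \emph{constant} Laurent coefficient of $F_b$ at $s=0$, which Theorem \ref{thm-db} does not provide; one must reopen its proof and read this off the $Z_b$-expansion, getting $\frac{b+1}{12}+\frac12\bigl(\frac{b-1}{2\log b}\log 2\pi-\frac{b+1}{4}\bigr)$, after which the stated $\frac{b+1}{12}$ does emerge. Second, collapsing the several contributions at $s=2-k+\omega$ into the closed form requires exactly the Euler convolution identity
\begin{equation*}
\sum_{j=1}^{k/2-1}\binom{k}{2j}B_{2j}B_{k-2j}=-(k+1)B_k ,
\end{equation*}
the even-$k$ case of $\sum_{j=0}^{k}\binom{k}{j}B_jB_{k-j}=-(k-1)B_k$; with it, your residue sum becomes
$-\frac{b-1}{\log b}\zeta(\omega)\prod_{j=1}^{k-2}(\omega-j)\cdot\frac{1}{k!}\bigl(2B_k+\sum_{j}\binom{k}{2j}B_{2j}B_{k-2j}\bigr)=\frac{b-1}{\log b}\zeta(\omega)\frac{B_k}{k(k-2)!}\prod_{j=1}^{k-2}(\omega-j)$, as required (I checked the $k=2$ cases, including the tamed $m=0$ point, directly). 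So your plan does complete, at the cost of importing these two facts; the paper's choice of kernel packages both of them into the single power-series identity for $\frac{x^2e^x}{(e^x-1)^2}$ and thereby avoids all of this bookkeeping, which is what working with the differenced series $Z_b$ rather than $F_b$ buys.
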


An interesting feature of the above theorems is the abundance of poles. Since each function $F_b(s)$ and $G_b(s)$ has $\asymp r^{2}$ poles in the disk $\lvert s \rvert<r$, we have the following corollary, which we discuss further in Section \ref{sec-meromorphic-functions}.

\begin{cor}
The functions $F_b(s)$ and $G_b(s)$ are meromorphic functions of order at least $2$ on $\CC$.
\end{cor}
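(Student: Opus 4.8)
The plan is to obtain the lower bound on the order directly from the density of poles recorded in Theorems~\ref{thm-db} and~\ref{thm-sb}, using classical value-distribution theory. Recall that for a meromorphic function $f$ on $\CC$ the order is
\[
\rho(f) = \limsup_{r\to\infty}\frac{\log T(r,f)}{\log r},
\]
where $T(r,f)=m(r,f)+N(r,f)$ is the Nevanlinna characteristic and
\[
N(r,f)=\int_0^r \frac{n(t,f)-n(0,f)}{t}\,dt + n(0,f)\log r,
\]
with $n(t,f)$ the number of poles of $f$ (counted with multiplicity) in the disk $\lvert s\rvert\le t$. Since $m(r,f)\ge 0$, the first fundamental theorem gives $N(r,f)\le T(r,f)$, so it is enough to show that $N(r,f)$ grows at least like a fixed multiple of $r^2$.

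First I would carry out the pole count. By Theorem~\ref{thm-db} the poles of $F_b$ lie at the points $s=1-k+2\pi i m/\log b$ with $m\in\ZZ$ and $k\in\{0,1\}\cup\{2,4,6,\dotsc\}$, and by Theorem~\ref{thm-sb} those of $G_b$ lie at $s=1$ together with $s=2-k+2\pi i m/\log b$ for $m\in\ZZ$ and $k\in\{0\}\cup\{2,4,6,\dotsc\}$. In each case the poles occupy a family of vertical lines whose real parts tend to $-\infty$ with gaps bounded by $2$, so the number of such lines meeting $\lvert s\rvert\le r$ is $\asymp r$; and along each line the poles have imaginary parts in arithmetic progression with common difference $2\pi/\log b$, so the number on a given line with modulus of imaginary part at most $r$ is also $\asymp r$. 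Multiplying gives $n(r,f)\asymp r^2$ for both $f=F_b$ and $f=G_b$, as asserted in the statement.

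Consequently, for a suitable $r_0>0$ and constant $c>0$,
\[
N(r,f)\ge \int_{r_0}^r \frac{n(t,f)-n(0,f)}{t}\,dt \ge c\int_{r_0}^r t\,dt \ge \tfrac{c}{2}\,r^2 - O(1),
\]
and therefore $T(r,f)\ge N(r,f)\ge \tfrac{c}{2}r^2 - O(1)$. Taking the $\limsup$ in the definition of the order yields $\rho(f)\ge 2$ for both $F_b$ and $G_b$, which is the corollary. The only substantive step is the lattice count $n(r,f)\asymp r^2$; this is routine, but one must check that the admissible column indices $k$ really do have bounded gaps, so that the number of occupied vertical lines meeting the disk is $\asymp r$ rather than $o(r)$. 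As an alternative to invoking the characteristic function, one may note that a meromorphic function of finite order $\rho$ is a quotient of two entire functions of order at most $\rho$, so the exponent of convergence of its poles is at most $\rho$; since $n(r,f)\asymp r^2$ forces this exponent to equal $2$, we again conclude $\rho(f)\ge 2$, the case $\rho(f)=\infty$ being trivial.
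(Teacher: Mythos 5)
Your proof is correct and takes essentially the same approach as the paper: both arguments rest on the fact that Theorems \ref{thm-db} and \ref{thm-sb} place $\gg r^2$ poles in the disc $\lvert s \rvert < r$, which is incompatible with order $\rho < 2$. The only difference is that the paper cites Titchmarsh's bound $n(r,a) = O(r^{\rho+\varepsilon})$ for meromorphic functions of order $\rho$ as a black box, while you derive the same conclusion directly from the Nevanlinna characteristic via $T(r,f) \ge N(r,f)$ together with the lattice count.
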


The Riemann zeta function, the Dirichlet L-functions, and the Dirichlet series generating functions of many important arithmetic functions (such as the M\"obius function $\mu(n)$, the von Mangoldt function $\Lambda(n)$, the Euler totient function $\phi(n)$, and the sum-of-diviors functions $\sigma_\alpha(n)$) analytically continue as meromorphic functions of order $1$ on the complex plane. The Dirichlet series $F_b(s)$ and $G_b(s)$ thus have a different analytic character than many other Dirichlet series considered in number theory.

In Section \ref{sec-non-int}, we use a formula of Delange \cite{delange-75} for $S_b(n)$ to define continuous real-valued interpolations of the functions $d_b(n)$ and $S_b(n)$ from integer bases $b\geq 2$ to a real parameter $\beta>1$. As before, we associate to these interpolated sum-of-digits functions the Dirichlet series
\begin{equation}
F_\beta(s) = \sum_{n=1}^\infty \frac{d_\beta(n)}{n^s}
\end{equation}
and
\begin{equation}
G_\beta(s) = \sum_{n=1}^\infty \frac{S_\beta(n)}{n^s}.
\end{equation}

We prove that these Dirichlet series each have a meromorphic continuation one unit to the left of their halfplane of absolute convergence. For the function $F_\beta(s)$ we have the following theorem.

\begin{thm}
For each real $\beta>1$, the function $F_\beta(s)$ has a meromorphic continuation to the halfplane $\Re(s)>0$, with a double pole at $s=1$ with Laurent expansion
\begin{equation}
F_\beta(s) = \frac{\beta - 1}{2\log \beta}(s-1)^{-2} + \biggl(\frac{\beta-1}{2\log \beta} \bigl(\log(2\pi)\bigr) - \frac{\beta+1}{4}\biggr) (s-1)^{-1} +O(1)
\end{equation}
and simple poles at $s=1+2\pi i m / \log \beta$ for $m\in\ZZ$ with $m\neq 0$ with residue
\begin{equation}
\Res\biggl(F_\beta(s),s=1+\frac{2\pi i m}{\log \beta}\biggr) =  -\frac{\beta-1}{2\pi i m}\zeta\biggl(\frac{2\pi i m}{\log \beta} \biggr).
\end{equation}
\end{thm}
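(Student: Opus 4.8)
The plan is to pass to exponential generating functions and use the Mellin transform, taking the interpolated formula of Delange (Theorem~\ref{thm-delange}) as the sole input. Writing that formula in spectral form,
\[
S_\beta(n) = \frac{\beta-1}{2\log\beta}\,n\log n + \sum_{m\in\ZZ}c_m\,n^{\rho_m},\qquad \rho_m := 1+\frac{2\pi i m}{\log\beta},
\]
where $c_0$ is the mean value of Delange's periodic function and $c_m = -\frac{\beta-1}{2\pi i m\,\rho_m}\zeta\bigl(\tfrac{2\pi i m}{\log\beta}\bigr)$ for $m\neq0$, the functional equation for $\zeta$ gives $|c_m| = O(|m|^{-3/2+\varepsilon})$, so $\sum_m|c_m|<\infty$. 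With $d_\beta(n)=S_\beta(n+1)-S_\beta(n)$ I would form the theta functions $\Xi_\beta(t)=\sum_{n\ge1}S_\beta(n)e^{-nt}$ and $\theta_\beta(t)=\sum_{n\ge1}d_\beta(n)e^{-nt}$ and record the identity $\theta_\beta(t)=(e^t-1)\Xi_\beta(t)$, which follows by reindexing and $S_\beta(1)=0$. This is the generating-function shadow of Abel summation, and it is what trades a continuation of $\Xi_\beta$ for a continuation of $F_\beta$ shifted one unit to the right, i.e.\ to $\Re(s)>0$. For $\Re(s)>1$ one has $F_\beta(s)=\Gamma(s)^{-1}\int_0^\infty\theta_\beta(t)\,t^{s-1}\,dt$; splitting at $t=1$, the tail is entire since $\theta_\beta(t)\to0$ exponentially, so everything rests on the behaviour of $\theta_\beta$ as $t\to0^+$.

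Next I would isolate the leading layer of $\Xi_\beta$. Using the decomposition,
\[
\Xi_\beta(t)=\frac{\beta-1}{2\log\beta}\,\Lambda(t)+\sum_{m\in\ZZ}c_m\,\Theta_{\rho_m}(t),\qquad \Lambda(t):=\sum_n n\log n\,e^{-nt},\quad \Theta_\rho(t):=\sum_n n^\rho e^{-nt},
\]
the $m$-sum converges absolutely on $(0,\infty)$ because $|\Theta_{\rho_m}(t)|\le\sum_n n\,e^{-nt}$ uniformly in $m$. The Mellin--Barnes integrals $\int_0^\infty\Lambda\,t^{w-1}dt=-\Gamma(w)\zeta'(w-1)$ and $\int_0^\infty\Theta_\rho\,t^{w-1}dt=\Gamma(w)\zeta(w-\rho)$ give, by moving the contour past the pole at $w=2$ (resp.\ $w=\rho+1$), the $t^{-2}$-order terms $\Lambda(t)=t^{-2}\bigl(a\log(1/t)+b\bigr)+O(1)$ and $\Theta_{\rho_m}(t)=\Gamma(\rho_m+1)t^{-\rho_m-1}+O(1)$. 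Summing the latter over $m$ is legitimate and the resulting layer $t^{-2}\sum_m c_m\Gamma(\rho_m+1)t^{-2\pi i m/\log\beta}$ converges, indeed exponentially, because $\Gamma(\rho_m+1)=\Gamma\bigl(2+\tfrac{2\pi i m}{\log\beta}\bigr)$ decays like $e^{-\pi^2|m|/\log\beta}$. Multiplying this leading layer by $(e^t-1)=t+O(t^2)$ and Mellin-transforming term by term produces exactly the poles claimed in $\Re(s)>0$: the mode $m$ gives $\frac{1}{\Gamma(s)}\cdot\frac{c_m\Gamma(\rho_m+1)}{s-\rho_m}$, a simple pole at $s=\rho_m$ with residue $c_m\rho_m=-\frac{\beta-1}{2\pi i m}\zeta\bigl(\tfrac{2\pi i m}{\log\beta}\bigr)$, while the smooth term together with the $m=0$ mode produces the double pole at $s=1$, whose Laurent head matches the stated coefficients once the value of $c_0$ from Delange's formula and the expansion of $1/\Gamma(s)$ at $s=1$ are inserted.

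The hard part is the remainder $\Xi_\beta(t)-L(t)$, where $L(t)$ is the $t^{-2}$-order layer above: I must show its contribution to $F_\beta$ is analytic on $\Re(s)>0$, and it is here that the boundary $\Re(s)=0$ is forced. Writing $\Theta_{\rho_m}(t)-\Gamma(\rho_m+1)t^{-\rho_m-1}=\frac{1}{2\pi i}\int_{(\sigma_1)}\Gamma(w)\zeta(w-\rho_m)t^{-w}\,dw$ for $\sigma_1\in(1,2)$, the exponential decay of $\Gamma(\sigma_1+iv)$ in $v$ localizes the integral near $v=0$, where $\zeta(\sigma_1-\rho_m+iv)$ is of size $O(|m|^{(2-\sigma_1)/2+\varepsilon})$; hence the remainder is $O\bigl(t^{-\sigma_1}\bigr)$ after summation, provided $\sum_m|c_m|\,|m|^{(2-\sigma_1)/2+\varepsilon}<\infty$, which holds precisely when $\sigma_1>1$. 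Multiplying by $(e^t-1)$ and taking the Mellin transform then continues this piece analytically to $\Re(s)>\sigma_1-1$; letting $\sigma_1\downarrow1$ yields analyticity on all of $\Re(s)>0$. The same computation shows why the method stops there: the relevant coefficient sum diverges for $\sigma_1\le1$ (the compensating $\Gamma$-factor that tamed the leading layer is absent one order down, and already $\sum_m c_m\zeta(-\rho_m)$ diverges), so no choice of contour pushes past $\Re(s)=0$. Collecting the poles in $\Re(s)>0$ --- the double pole at $s=1$ and the simple poles at $s=1+2\pi i m/\log\beta$ with the residues above --- completes the proof.
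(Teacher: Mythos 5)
Your proposal is correct in substance, but it takes a genuinely different route from the paper, and it contains one false assertion that needs patching. The paper proceeds in two steps: it first continues $G_\beta(s)$ to $\Re(s)>1$ by substituting Delange's interpolated formula directly into the Dirichlet series, obtaining $G_\beta(s) = -\frac{\beta-1}{2\log\beta}\zeta'(s-1)+\sum_k c_\beta(k)\,\zeta\bigl(s-1-\tfrac{2\pi i k}{\log\beta}\bigr)$, so that the continuation is inherited from that of $\zeta$ and the convexity bound is needed only to show the $k$-sum converges locally uniformly; it then transfers this to $F_\beta$ through the same generating-function identity you use, arriving at $F_\beta(s) = -S_\beta(1)(s+1)+sG_\beta(s+1)+R(s)$ with $R$ holomorphic in $\Re(s)>0$, from which the double pole at $s=1$ and the residues $\rho_m c_\beta(m)$ are read off. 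You instead stay in the $t$-domain throughout: you expand $\Xi_\beta(t)$ into modes, extract the singular layer by Mellin--Barnes contour shifts with uniform-in-$m$ control (your $\sigma_1>1$ bookkeeping is correct, and it is exactly where the barrier $\Re(s)=0$ appears), and only then transform back. In effect you re-prove the continuation of the $G$-part as a theta-function asymptotic rather than quoting the continuation of $\zeta$; this is more work, but it is self-contained and makes visible \emph{why} the method stops at $\Re(s)=0$ --- your remark that $\sum_m c_m\zeta(-\rho_m)$ diverges is the $t$-domain shadow of the paper's open question about continuing $G_\beta$ past $\Re(s)=1$.

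The one genuine error: you assert $S_\beta(1)=0$. That is true for integer bases, where $S_b(1)$ is an empty sum and $h_b(0)=0$ is part of Delange's theorem, but for non-integer $\beta$ the definition gives $S_\beta(1)=h_\beta(0)=\sum_k c_\beta(k)$, and there is no reason for this to vanish; this is precisely why the paper carries $S_\beta(1)$ through every formula. The correct identity is $\theta_\beta(t)=(e^t-1)\Xi_\beta(t)-S_\beta(1)$. Fortunately the patch is trivial: on $(0,1)$ the extra constant contributes $-S_\beta(1)/s$ to $\Gamma(s)F_\beta(s)$, i.e.\ $-S_\beta(1)/\Gamma(s+1)$ to $F_\beta(s)$, which is entire, so none of the claimed poles or residues change --- but as written your identity is false and should be corrected. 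You should also actually carry out the Laurent computation at $s=1$ rather than assert it: with $\Lambda(t)=t^{-2}\bigl(\log(1/t)+1-\gamma\bigr)+O(t^{-\sigma_1})$ and $1/\Gamma(s)=1+\gamma(s-1)+O\bigl((s-1)^2\bigr)$, the Euler constants cancel and the $(s-1)^{-1}$ coefficient comes out to $c_\beta(0)+\frac{\beta-1}{2\log\beta}$, which indeed equals the stated $\frac{\beta-1}{2\log\beta}\log(2\pi)-\frac{\beta+1}{4}$.
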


For the function $G_\beta(s)$ we have the following theorem.

\begin{thm}
For each real $\beta>1$, the function $G_\beta(s)$ is meromorphic in the region $\Re(s)>1$ with a double pole at $s=2$ with Laurent expansion
\begin{equation}
G_\beta(s) = \frac{\beta-1}{2\log \beta}(s-2)^{-2} + \biggl(\frac{\beta-1}{2\log \beta} \bigl(\log(2\pi) - 1\bigr) - \frac{\beta+1}{4}\biggr)(s-2)^{-1} +O(1)
\end{equation}
and simple poles at $s=2+2\pi i m / \log \beta$ for $m\in\ZZ$ with $m\neq 0$ with residue
\begin{equation}
\Res\biggl(G_b(s),s=2+\frac{2\pi i m}{\log \beta}\biggr) = -\frac{\beta-1}{2\pi i m}\biggl(1+\frac{2\pi i m}{\log \beta}\biggr)^{-1}\zeta\biggl(\frac{2\pi i m}{\log \beta} \biggr).
\end{equation}
\end{thm}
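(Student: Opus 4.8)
The plan is to imitate the integer-base argument but to replace the digit recursion (which is unavailable for non-integer $\beta$) by Delange's exact formula, Theorem~\ref{thm-delange}, taken as the very definition of $S_\beta(n)$. In its interpolated form this formula reads
\begin{equation*}
S_\beta(n)=\frac{\beta-1}{2}\,n\log_\beta n+n\,\Psi_\beta\!\bigl(\log_\beta n\bigr),
\end{equation*}
where $\Psi_\beta$ is the continuous, $1$-periodic Delange function, with Fourier expansion $\Psi_\beta(x)=\sum_{m\in\ZZ}c_m(\beta)e^{2\pi i m x}$ and constant term $c_0(\beta)=\frac{\beta-1}{2\log\beta}(\log(2\pi)-1)-\frac{\beta+1}{4}$. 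Substituting this into $G_\beta(s)=\sum_{n\geq1}S_\beta(n)n^{-s}$, which converges absolutely for $\Re(s)>2$, and using $\log_\beta n=\log n/\log\beta$ together with $e^{2\pi i m\log_\beta n}=n^{2\pi i m/\log\beta}$, I would split $G_\beta$ into a main piece coming from $\frac{\beta-1}{2}n\log_\beta n$ and an oscillating piece coming from $n\,\Psi_\beta(\log_\beta n)$.

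The main piece is immediate: $\frac{\beta-1}{2\log\beta}\sum_{n\geq1}(\log n)\,n^{1-s}=-\frac{\beta-1}{2\log\beta}\zeta'(s-1)$, which is meromorphic on all of $\CC$ with a double pole at $s=2$ of leading Laurent coefficient $\frac{\beta-1}{2\log\beta}$; since $\zeta'(s-1)=-(s-2)^{-2}+O(1)$ near $s=2$ carries no simple-pole term, the coefficient of $(s-2)^{-1}$ in $G_\beta$ must come entirely from the oscillating piece. For the oscillating piece, interchanging the (absolutely convergent, for $\Re(s)>2$) sums over $n$ and $m$ gives
\begin{equation*}
\sum_{n\geq1}n^{1-s}\Psi_\beta(\log_\beta n)=\sum_{m\in\ZZ}c_m(\beta)\sum_{n\geq1}n^{1-s+2\pi i m/\log\beta}=\sum_{m\in\ZZ}c_m(\beta)\,\zeta\!\Bigl(s-1-\frac{2\pi i m}{\log\beta}\Bigr).
\end{equation*}
Each summand $\zeta(s-1-2\pi i m/\log\beta)$ is meromorphic with a single simple pole at $s=2+2\pi i m/\log\beta$ of residue $1$, so term by term this produces the asserted simple poles with residue $c_m(\beta)$; the $m=0$ term contributes the residue $c_0(\beta)$ at $s=2$, completing the Laurent expansion there. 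I would then identify $c_m(\beta)$ with the claimed residue $-\frac{\beta-1}{2\pi i m}(1+2\pi i m/\log\beta)^{-1}\zeta(2\pi i m/\log\beta)$ directly from Delange's computation of the Fourier coefficients, noting that this is exactly the $b\mapsto\beta$ specialization of the residues in Theorem~\ref{thm-sb}.

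The crux, and the step I expect to be the main obstacle, is justifying that the series $\sum_{m}c_m(\beta)\zeta(s-1-2\pi i m/\log\beta)$ converges locally uniformly, away from its poles, throughout the half-plane $\Re(s)>1$; analytic continuation from $\Re(s)>2$ then transfers the identity for $G_\beta(s)$ to this larger region and legitimizes reading off the poles. This needs two quantitative inputs. First, the functional equation forces $|\zeta(2\pi i m/\log\beta)|\asymp|m|^{1/2}$ as $|m|\to\infty$, whence $|c_m(\beta)|\ll|m|^{-3/2}$. Second, for $1<\Re(s)<2$ the argument $s-1-2\pi i m/\log\beta$ lies in the strip $0<\Re<1$ with imaginary part $\asymp|m|$, so the Phragm\'en--Lindel\"of convexity bound gives $|\zeta(s-1-2\pi i m/\log\beta)|\ll_\varepsilon|m|^{(2-\Re(s))/2+\varepsilon}$. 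Multiplying, the general term is $\ll_\varepsilon|m|^{-1/2-\Re(s)/2+\varepsilon}$, which is summable precisely when $\Re(s)>1$; this is also why the continuation stops exactly at the line $\Re(s)=1$ rather than extending further. The analogous, slightly more delicate bookkeeping for $F_\beta$, whose Fourier coefficients decay only like $|m|^{-1/2}$, accounts for its continuation halting at $\Re(s)=0$.
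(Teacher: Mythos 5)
Your proposal is correct and follows essentially the same route as the paper's proof: take the Delange-formula definition of $S_\beta(n)$, split $G_\beta(s)$ into the main piece $-\frac{\beta-1}{2\log\beta}\zeta'(s-1)$ plus the oscillating piece, interchange sums for $\Re(s)>2$ to get $\sum_{m}c_\beta(m)\,\zeta\bigl(s-1-\tfrac{2\pi i m}{\log\beta}\bigr)$, and read off the poles after establishing locally uniform convergence in $\Re(s)>1$. In fact your convergence step is slightly more careful than the paper's: you invoke the convexity bound $\zeta\bigl(s-1-\tfrac{2\pi i m}{\log\beta}\bigr)\ll_\varepsilon |m|^{(2-\Re(s))/2+\varepsilon}$, whereas the paper quotes only the uniform bound $\ll |m|^{1/2+\varepsilon}$, which when multiplied by $c_\beta(m)\ll |m|^{-3/2+\varepsilon}$ gives terms of size $|m|^{-1+2\varepsilon}$ and so does not by itself imply the claimed uniform convergence.
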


To prove these theorems, we start by obtaining the continuation of the series $G_\beta(s)$ by working directly with its Dirichlet series and then obtain the continuation of $F_\beta(s)$ by studying the relation between these two Dirichlet series.

\subsection{Previous work}\label{sec-previous-work}

There has been much previous work studying the functions $d_b(n)$ and $S_b(n)$. The  function $d_b(n)$ exhibits significant fluctuations as $n$ changes to $n+1$. It can only increase slowly, having  $d_b(n+1) \leq d_b(n)+1$ but it can decrease by an arbitrarily large amount. The sequence $d_b(n)$ is a $b$-regular sequence in the sense of Allouche and Shallit \cite[Ex.~2, Sec.~7]{allouche-shallit-92} and is a member of the $b$-th arithmetic fractal group
$\Gamma_b(\ZZ)$ of Morton and Mourant \cite[p.~256]{morton-mourant-89}.  Chen et al.\ \cite{chen-hwang-zacharovas-14}  survey results  on the sum-of-digits function of random integers, and give many references.

Concerning the  cumulative sum-of-digits  function, 
Mirsky \cite{mirsky-49} proved in 1949 that for any integer base $b\geq 2$, the function $S_b(n)$ has the asymptotic
\begin{equation}
S_b(n) = \frac{b-1}{2\log b} n \log n + O(n).
\end{equation}
In 1968 Trollope \cite{trollope-68} expressed the error term for the base-$2$ cumulative digit sum $S_2(n)$ in terms of a continuous everywhere nondifferentiable function, the Takagi function---see \cite{lagarias-12} for a survey of the properties of this function. 
In 1975 Delange \cite{delange-75} proved the following formula for $S_b(n)$, expressing the error term as a Fourier series with coefficients involving values of the Riemann zeta function on the imaginary axis.

\begin{thm}[Delange \cite{delange-75}]\label{thm-delange}
The cumulative sum-of-digits function $S_b(n)$ satisfies
\begin{equation}\label{eq-delange-formula}
S_b(n) = \frac{b-1}{2\log b} n \log n + h_b\biggl(\frac{\log n}{\log b} \biggr) n
\end{equation}
where $h_b$ is a nowhere-differentiable function of period $1$. The function $h_b$ has a Fourier series
\begin{equation}
h_b(x) = \sum_{k=-\infty}^\infty c_b(k)e^{2\pi i k x}
\end{equation}
with coefficients
\begin{equation}
c_b(k) = -\frac{b-1}{2\pi i k}\biggl(1+\frac{2\pi i k}{\log b}\biggr)^{-1}\zeta\biggl(\frac{2\pi i k}{\log b} \biggr)
\end{equation}
for $k\neq 0$ and
\begin{equation}
c_b(0) = \frac{b-1}{2\log b}\bigl(\log(2\pi) - 1\bigr) - \frac{b+1}{4}.
\end{equation}
\end{thm}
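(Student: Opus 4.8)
The plan is to read Delange's formula off the analytic structure of $F_b(s)$ supplied by Theorem~\ref{thm-db}, by means of Perron's formula. Since $S_b(n)=\sum_{m=1}^{n-1}d_b(m)$ is exactly a truncated summatory function of the coefficients of $F_b(s)=\sum_m d_b(m)m^{-s}$, the truncated Perron formula gives, for $x=n-\tfrac12$ (chosen non-integral to avoid boundary weights) and any $c>1$,
\[
S_b(n) = \frac{1}{2\pi i}\int_{c-iT}^{\,c+iT} F_b(s)\,\frac{x^{s}}{s}\,ds + R(x,T),
\]
with $R(x,T)$ bounded by the standard Perron error estimate. The strategy is then to push the contour leftward to a vertical line $\Re(s)=\sigma$ with $\sigma<1$, collecting the residues of $F_b(s)\,x^{s}/s$ at the poles crossed, and to argue that the shifted integral and the horizontal segments contribute only terms of lower order than $n$.

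The residue computation is exactly where Delange's formula materializes. At the double pole $s=1$, expanding the kernel as $x^{s}/s=x\,e^{(s-1)\log x}/\bigl(1+(s-1)\bigr)$ against the Laurent tail from Theorem~\ref{thm-db} produces $\tfrac{b-1}{2\log b}\,x\log x + c_b(0)\,x$, where the constant $\tfrac{b-1}{2\log b}(\log(2\pi)-1)-\tfrac{b+1}{4}$ is precisely $c_b(0)$. At each simple pole $s=1+2\pi i m/\log b$ with $m\neq 0$, the residue of $F_b$ equals $-\tfrac{b-1}{2\pi i m}\zeta(2\pi i m/\log b)$; multiplying by the kernel value $x^{s}/s$ at that point and factoring out $x$ yields exactly $x\,c_b(m)\,e^{2\pi i m(\log x)/\log b}$ with
\[
c_b(m) = -\frac{b-1}{2\pi i m}\Bigl(1+\frac{2\pi i m}{\log b}\Bigr)^{-1}\zeta\Bigl(\frac{2\pi i m}{\log b}\Bigr),
\]
matching the stated Fourier coefficients. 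Summing over $m\neq 0$ assembles the series $x\,h_b(\log x/\log b)$, the poles of $F_b$ left of $\Re(s)=1$ together with the pole of $1/s$ at $s=0$ feed $O(x^{\sigma})=o(n)$ into the shifted integral, and replacing $x=n-\tfrac12$ by $n$ costs only a negligible error, reproducing \eqref{eq-delange-formula}.

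The principal obstacle is analytic. Because $F_b$ has poles at \emph{every} point $1+2\pi i m/\log b$ on the line $\Re(s)=1$, one cannot translate the contour straight across that line; the residues there must be harvested as a genuinely infinite sum whose convergence requires justification. This reduces to size estimates on the imaginary axis: the convexity bound $\zeta(2\pi i m/\log b)\ll |m|^{1/2+\varepsilon}$ gives $|c_b(m)|\ll |m|^{-3/2+\varepsilon}$, so the Fourier series converges absolutely and defines the continuous periodic function $h_b$. Controlling the shifted vertical integral and the horizontal connecting segments likewise demands polynomial growth bounds for $F_b(s)$ in vertical strips, which follow from the same $\zeta$-growth together with the explicit products $\prod_{j}(\cdots)$ appearing in the left-half-plane residues; balancing the truncation height $T$ against these growth rates is the delicate quantitative step, and is where the Mellin-transform technology of \cite{flajolet-94} would streamline the bookkeeping.

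Finally, the nowhere-differentiability of $h_b$ is a separate ingredient not recovered from the pole data alone. The decay $|c_b(m)|\asymp |\zeta(2\pi i m/\log b)|\,|m|^{-2}$ is too slow for term-by-term differentiation to be valid—$\sum_m |m\,c_b(m)|$ diverges precisely because $\zeta$ does not decay along the imaginary axis—so $h_b$ cannot be smoothed by its Fourier series, but establishing nondifferentiability at \emph{every} point requires the self-affine structure inherited from the functional equation $S_b(bn)=b\,S_b(n)+\tfrac{b(b-1)}{2}\,n$ (and its analogues for $S_b(bn+j)$), which is Delange's original argument \cite{delange-75}.
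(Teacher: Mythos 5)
First, a point of comparison: the paper does not prove Theorem~\ref{thm-delange} at all---it is quoted from Delange \cite{delange-75}, and the paper points to Flajolet et al.\ \cite{flajolet-94} for a complex-analytic proof. So your proposal must be measured against those arguments. Your residue computations are correct and not circular (Theorem~\ref{thm-db} is proved in the paper via $Z_b$, independently of Delange), and they do explain convincingly where the coefficients $c_b(k)$ come from. But the analytic superstructure has a genuine gap. You assert that polynomial growth of $F_b(s)$ in vertical strips ``follows from'' the $\zeta$-growth and the residue products; it does not. The locations of poles and the values of residues of a meromorphic function give no control whatsoever on its size between the poles, and here there is concrete reason for alarm: the paper's own corollary shows $F_b$ has order at least $2$ as a meromorphic function, and the only representation of $F_b$ to the left of $\Re(s)=1$, namely \eqref{eq-first-fb-expansion}, carries a remainder $R_K(s)$ multiplied by $1/\Gamma(s)$, which grows like $e^{\pi|t|/2}$ on vertical lines. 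Without polynomial bounds on horizontal segments at heights $T\to\infty$ and on the shifted vertical line, the truncated-Perron contour shift collapses. This is exactly why \cite{flajolet-94} does \emph{not} apply Perron to $F_b$: the workable route is to apply the order-one Mellin--Perron formula, with kernel $x^s/\bigl(s(s+1)\bigr)$, to the differenced series $Z_b(s)=\frac{b^s-b}{b^s-1}\zeta(s)$ of \eqref{eq-zb-formula}, using the Abel-summation identity $\sum_{k<n}(n-k)\bigl(d_b(k)-d_b(k-1)\bigr)=S_b(n)$. That series genuinely has polynomial vertical growth (inherited from $\zeta$), the kernel's quadratic decay makes the shifted integrals absolutely convergent, and the residues at $s=2\pi i m/\log b$ reproduce precisely your terms $c_b(m)n\,e^{2\pi i m\log n/\log b}$. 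So the switch to $Z_b$ and the higher-order kernel is not ``streamlined bookkeeping''; it is what makes the argument exist.

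The second gap is exactness. Delange's theorem \eqref{eq-delange-formula} is an identity, not an asymptotic; any contour-shift argument, even repaired as above, delivers $S_b(n)=\frac{b-1}{2\log b}n\log n+h_b\bigl(\log n/\log b\bigr)n+O(n^{1-\delta})$, and your ``replacing $x=n-\tfrac12$ by $n$ costs only a negligible error'' concedes an error term that the theorem does not permit. The standard repair is a bootstrapping step: the recursion $S_b(bn)=bS_b(n)+\tfrac{b(b-1)}{2}n$, together with the periodicity of $h_b$, shows that $D(n):=S_b(n)/n-\frac{b-1}{2\log b}\log n-h_b\bigl(\log n/\log b\bigr)$ satisfies $D(bn)=D(n)$; since the Perron analysis gives $D(n)\to 0$, it follows that $D(n)=\lim_{j\to\infty}D(b^j n)=0$ for every $n$, which is the exact formula. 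You invoke this self-similar structure only in connection with nowhere-differentiability (which you in any case defer to Delange's original argument, so that part of the statement remains unproved), whereas it is indispensable already for the equality itself.
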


A complex-analytic proof of a summation formula for general $q$-additive functions, of which the base-$q$ sum-of-digits function is an example, was given by Mauclaire and Murata in 1983 \cite{mauclaire-murata-83a,mauclaire-murata-83b,murata-mauclaire-88}. A shorter complex-analytic proof of \eqref{eq-delange-formula} in the specific case of $S_2(n)$ was given by Flajolet, Grabner, Kirschenhofer, Prodinger, and Tichy in 1994 \cite{flajolet-94}. The method of Flajolet et al.\ is based on applying a variant of Perron's formula to the Dirichlet series
\begin{equation}
\sum_{n=1}^\infty \bigl(d_2(n)-d_2(n-1)\bigr) n^{-s}.
\end{equation}
Grabner and Hwang \cite{grabner-hwang-05} study higher moments of the sum-of-digits function by similar complex-analytic methods.

Our formulas for the residues of $F_b(s)$ and $G_b(s)$ involve the Bernoulli numbers. Kellner \cite{kellner-17} and Kellner and Sondow \cite{kellner-sondow-17} investigate another relation between sums of digits and Bernoulli numbers, proving that the least common multiple of the denominators of the coefficients of the polynomial $\sum_{i=0}^n n^k$, which can be written in terms of a Bernoulli polynomial, can be expressed as a certain product of primes satisfying $d_p(n+1)\geq p$.

\section{Sum-of-digits Dirichlet series}

First we consider basic properties of the Dirichlet series
\begin{equation}
F_b(s) = \sum_{n=1}^\infty \frac{d_b(n)}{n^s}
\end{equation}
attached to the base-$b$ digit sum of $n$ and the Dirichlet series
\begin{equation}
G_b(s) = \sum_{n=1}^\infty \frac{S_b(n)}{n^s}
\end{equation}
attached to the cumulative base-$b$ digit sum. For standard references on the basic analytic properties of Dirichlet series, see the books of Hardy and Riesz \cite{hardy-riesz} or Titchmarsh \cite[Ch.~IX]{titchmarsh-tof}.


Recall that each ordinary Dirichlet series $\sum a_n n^{-s}$ has an abscissa of conditional convergence $\sigma_c$ such that the Dirichlet series converges and defines a holomorphic function if $\Re(s)>\sigma_c$ and diverges if $\Re(s)<\sigma_c$. Each Dirichlet series also has an abscissa of absolute convergence $\sigma_a$ such that the Dirichlet series converges absolutely if $\Re(s)>\sigma_a$ and does not converge absolutely if $\Re(s)<\sigma_a$. For ordinary Dirichlet series, one always has $\sigma_a-1\leq \sigma_c \leq \sigma_a$, and $\sigma_c=\sigma_a$ if the coefficients $a_n$ are nonnegative reals.

\begin{prop}
For each integer $b\geq 2$, the Dirichlet series
\begin{equation}\label{eq-fb-def-2}
F_b(s) = \sum_{n=1}^\infty \frac{d_b(n)}{n^s}
\end{equation}
converges and defines a holomorphic function for $\Re(s)>1$. 
\end{prop}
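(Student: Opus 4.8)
The plan is to establish absolute convergence of the series in the half-plane $\Re(s)>1$ by an elementary size bound on the digit sum $d_b(n)$, and then to invoke the standard fact that an ordinary Dirichlet series is holomorphic throughout the interior of its half-plane of convergence. The only quantitative input needed is that $d_b(n)$ grows at most logarithmically in $n$.

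First I would bound $d_b(n)$. Since the base-$b$ expansion of $n$ has its highest nonzero digit at index $\lfloor \log_b n\rfloor$ and each digit satisfies $\delta_{b,i}(n)\leq b-1$, I obtain
\begin{equation}
d_b(n)=\sum_{i=0}^{\lfloor \log_b n\rfloor}\delta_{b,i}(n)\leq (b-1)\bigl(\lfloor \log_b n\rfloor+1\bigr)\leq (b-1)\biggl(\frac{\log n}{\log b}+1\biggr),
\end{equation}
so that $d_b(n)\leq C_b\log n$ for all $n\geq 2$, with a constant $C_b$ depending only on $b$ (and $d_b(1)=1$).

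Next I would use this bound for a direct comparison. Writing $\sigma=\Re(s)$, for $\sigma>1$ we have
\begin{equation}
\sum_{n=1}^\infty\frac{\lvert d_b(n)\rvert}{n^\sigma}\leq 1+C_b\sum_{n=2}^\infty\frac{\log n}{n^\sigma}<\infty,
\end{equation}
since $\log n\ll_\delta n^\delta$ for every $\delta>0$, and choosing $0<\delta<\sigma-1$ dominates the tail by the convergent series $\sum n^{-(\sigma-\delta)}$. This shows the Dirichlet series converges absolutely, hence converges, for every $s$ with $\Re(s)>1$.

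Finally, because an ordinary Dirichlet series converges uniformly on compact subsets of the interior of its half-plane of convergence and therefore defines a holomorphic function there (see \cite[Ch.~IX]{titchmarsh-tof}), $F_b(s)$ is holomorphic on $\Re(s)>1$. The only step requiring any care is the logarithmic digit-sum bound; once it is in hand the analytic conclusions are immediate, so I expect no genuine obstacle. One can further check that the abscissa is \emph{exactly} $1$, matching the earlier assertion, by noting that $\sum_{n\leq N}d_b(n)=S_b(N+1)\sim \tfrac{b-1}{2\log b}N\log N$ by Mirsky's theorem, which via partial summation forces divergence at $s=1$; this refinement is not needed for the statement as stated.
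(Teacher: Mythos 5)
Your proof is correct and follows essentially the same route as the paper: both rest on the digit-count bound $d_b(n)\leq (b-1)\bigl(\lfloor \log n/\log b\rfloor+1\bigr)$ and then conclude convergence and holomorphy for $\Re(s)>1$ by standard Dirichlet series theory. The only cosmetic difference is that you compare $\sum \log n\, n^{-\sigma}$ termwise, while the paper passes through the summatory bound $S_b(n)\ll n\log n$ to bound the abscissa of absolute convergence; these are interchangeable.
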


\begin{proof}
A positive integer $n$ has $[\log n / \log b]+1$ digits when written in base $b$, each of which is at most $b-1$, so
\begin{equation}\label{eq-db-estimate}
d_b(n) \leq (b-1) \biggl(\biggl\lfloor\frac{\log n}{\log b} \biggr\rfloor+1\biggr).
\end{equation}
We then obtain the estimate
\begin{equation}\label{eq-sb-estimate}
 S_b(n) \ll n \log n
\end{equation}
with an implied constant depending on $b$. This implies that the Dirichlet series \eqref{eq-fb-def-2} has abscissa of absolute convergence at most 1 and therefore defines a holomorphic function for $\Re(s)>1$.
\end{proof}

\begin{prop}
For each integer $b\geq 2$, the Dirichlet series
\begin{equation}\label{eq-gb-def-2}
G_b(s) = \sum_{n=1}^\infty \frac{S_b(n)}{n^s}
\end{equation}
converges and defines a holomorphic function for $\Re(s)>2$. 
\end{prop}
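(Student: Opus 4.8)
The plan is to reuse the growth estimate already established in the proof of the previous proposition. There, via the digit-counting bound \eqref{eq-db-estimate} we have $d_b(m)\ll \log m$ with implied constant depending on $b$, and summing this over $m$ from $1$ to $n-1$ yields $S_b(n)\ll n\log n$, recorded as \eqref{eq-sb-estimate}. This single estimate is essentially all that is needed; the present statement is a routine comparison argument built on top of it.

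Concretely, I would write $\sigma=\Re(s)$ and bound the general term of the series \eqref{eq-gb-def-2} by
\begin{equation}
\biggl\lvert \frac{S_b(n)}{n^s}\biggr\rvert \ll \frac{n\log n}{n^\sigma} = \frac{\log n}{n^{\sigma-1}},
\end{equation}
again with implied constant depending on $b$. Since the series $\sum_{n\geq 2}\log n \,/\, n^{\alpha}$ converges for every real $\alpha>1$, this bound shows that $\sum_n S_b(n)n^{-s}$ converges absolutely whenever $\sigma-1>1$, that is, whenever $\Re(s)>2$. Hence the abscissa of absolute convergence of $G_b(s)$ is at most $2$. Invoking the standard fact that a Dirichlet series converges to a holomorphic function on the open half-plane strictly to the right of its abscissa of absolute convergence then gives the claimed holomorphy for $\Re(s)>2$.

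There is no real obstacle here: all the content lies in the elementary bound \eqref{eq-db-estimate}, which has already been proved, after which the proposition follows by comparison with the convergent series $\sum \log n / n^{\sigma-1}$. One could equally note that, since the coefficients $S_b(n)$ are nonnegative, the abscissae of conditional and absolute convergence coincide, so that $\sigma_c=\sigma_a\leq 2$; either route completes the argument.
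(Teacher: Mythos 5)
Your proof is correct and takes essentially the same approach as the paper: both arguments rest entirely on the estimate \eqref{eq-sb-estimate}, $S_b(n)\ll n\log n$, combined with standard Dirichlet series convergence facts. The only cosmetic difference is that the paper bounds the coefficient partial sums, $\sum_{m=1}^n S_b(m)\ll n^2\log n$, and invokes the partial-sum criterion for the abscissa of convergence, whereas you compare terms directly against $\sum_n \log n/n^{\sigma-1}$ to get absolute convergence for $\Re(s)>2$ --- equivalent routes here, as you note, since the coefficients are nonnegative.
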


\begin{proof}
The estimate \eqref{eq-sb-estimate} gives
\begin{equation}
\sum_{m=1}^n S_b(m) \ll n^2 \log n,
\end{equation}
which shows that the Dirichlet series \eqref{eq-gb-def-2} converges for $\Re(s)>2$.
\end{proof}

It follows from Delange's formula \eqref{eq-delange-formula} that $F_b(s)$ and $G_b(s)$ have abscissa of absolute convergence $\Re(s)=1$ and $\Re(s)=2$, respectively, and this can be proven directly using a more careful estimate of the function $d_b(n)$. We can also obtain the exact values of the abscissas of convergence as a corollary of our theorems on the meromorphic continuation of $F_b(s)$ and $G_b(s)$, since $F_b(s)$ has a pole at $s=1$ and $G_b(s)$ has a pole at $s=2$.

As in previous work on Dirichlet series associated to $q$-additive sequences, it is advantageous to consider the Dirichlet series
\begin{equation}
Z_b(s) =\sum_{n=1}^\infty \bigl(d_b(n)-d_b(n-1)\bigr) n^{-s}
\end{equation}
obtained by differencing the coefficients of the series $F_b(s)$, setting $d_b(0)=0$.
Identity \eqref{eq-zb-formula} in the following proposition appears in a more general form (for $q$-additive functions) in the work of Mauclaire and Murata \cite{mauclaire-murata-83a,mauclaire-murata-83b,murata-mauclaire-88} and is stated and proved explicitly for the sum-of-digits series by Allouche and Shallit \cite{allouche-shallit-90}. We give a more direct proof of this result.

\begin{prop}\label{prop-differenced-zeta}
For each integer $b\geq 2$, the Dirichlet series $Z_b(s)$ has abscissa of absolute convergence $\sigma_a=1$, abscissa of conditional convergence $\sigma_c=0$, and has a meromorphic continuation to $\CC$, satisfying
\begin{equation}\label{eq-zb-formula}
Z_b(s) = \frac{b^s-b}{b^s-1}\zeta(s).
\end{equation}
\end{prop}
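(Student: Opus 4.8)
The plan is to establish the closed-form identity $Z_b(s) = \frac{b^s-b}{b^s-1}\zeta(s)$ directly, and then read off the analytic data (abscissas of convergence and meromorphic continuation) from this formula.

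The key observation is a self-similar recursion for the differenced coefficients $e_b(n) := d_b(n)-d_b(n-1)$. First I would record the elementary fact that for $0 \leq r < b$ one has $d_b(bq+r) = d_b(q) + r$; in particular, writing any $n \geq 1$ as $n = bq + r$, the last digit $r$ increases by $1$ as we pass from $n-1$ to $n$ unless $r=0$, in which case a carry occurs. This yields the recursion $e_b(n) = 1$ when $b \nmid n$, and $e_b(n) = 1 - (b-1) + e_b(n/b) = e_b(n/b) - (b-2)$ when $b \mid n$; more cleanly, one checks that summing $e_b$ against $n^{-s}$ separates into the terms with $b \nmid n$ and those with $b \mid n$. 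The plan is to split the Dirichlet series as
\begin{equation}
Z_b(s) = \sum_{b \nmid n} e_b(n)n^{-s} + \sum_{b \mid n} e_b(n)n^{-s},
\end{equation}
evaluate the first sum using $e_b(n)=1$ for $b\nmid n$ (which contributes $\zeta(s)(1-b^{-s})$ from $\sum_{n\geq 1}n^{-s} - \sum_{b\mid n}n^{-s}$), and handle the second sum by the substitution $n = bm$, which by the recursion relates it back to $Z_b(s)$ itself. This should produce a functional equation of the form $Z_b(s) = \zeta(s)(1 - b^{-s}) + b^{-s}\bigl(Z_b(s) - (b-1)\zeta(s)b^{-s}\cdot(\text{correction})\bigr)$, and solving the resulting linear equation for $Z_b(s)$ should yield \eqref{eq-zb-formula}. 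I expect this self-referential bookkeeping—correctly tracking the carry contribution $-(b-1)$ against the $+1$ from the incremented digit—to be the main obstacle, since it is easy to misattribute a factor of $b^{-s}$ or drop the geometric-series correction term.

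Once the identity is in hand, the analytic conclusions are immediate. Since $\zeta(s)$ is entire except for a simple pole at $s=1$ and $\frac{b^s-b}{b^s-1}$ is meromorphic on all of $\CC$ (with removable singularity structure to be checked), the product is meromorphic on $\CC$, giving the continuation. For the abscissa of absolute convergence, the coefficients $e_b(n)$ satisfy $|e_b(n)| \leq b-1$ (bounded), so $\sum |e_b(n)| n^{-\sigma}$ converges exactly when $\sigma > 1$, giving $\sigma_a = 1$. For $\sigma_c = 0$: the numerator $b^s - b$ vanishes at $s=1$, cancelling the pole of $\zeta(s)$ there, so $Z_b(s)$ is in fact holomorphic at $s=1$ and throughout $\Re(s)>0$ apart from possible points where $b^s=1$, i.e. $s = 2\pi i m/\log b$ on the line $\Re(s)=0$; the absence of any pole with $\Re(s)>0$ together with the partial-summation bound $\sum_{n\leq N} e_b(n) = d_b(N) \ll \log N$ (so the summatory function grows subpolynomially) forces convergence for $\Re(s)>0$ and hence $\sigma_c \leq 0$, while the pole on $\Re(s)=0$ prevents convergence to the left, giving $\sigma_c = 0$ exactly.

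The one subtlety I would flag is verifying that $Z_b(s)$ genuinely fails to converge for $\Re(s)<0$, which pins down $\sigma_c=0$ rather than something smaller; this follows because a convergent Dirichlet series is holomorphic in its half-plane of convergence, and \eqref{eq-zb-formula} exhibits poles arbitrarily close to (and on) the line $\Re(s)=0$ coming from the zeros of $b^s-1$ that are not cancelled by zeros of the numerator $b^s-b$ or by $\zeta(s)$. Confirming this non-cancellation at $s=2\pi i m/\log b$ for $m\neq 0$—where $b^s=1$ but $b^s - b = 1-b \neq 0$ and $\zeta$ is finite and generically nonzero—is the cleanest way to certify that the abscissa of conditional convergence is not negative.
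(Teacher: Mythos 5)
Your overall strategy---splitting $Z_b(s)$ into the terms with $b\nmid n$ and $b\mid n$, using the carry structure to relate the second sum back to $Z_b(s)$, and solving the resulting linear equation---is sound, and in substance it is the same computation as the paper's: the paper encodes the carry structure as a Dirichlet convolution $d_b(n)-d_b(n-1)=\sum_{m\mid n}\alpha(m)\beta(n/m)$ with $\alpha$ supported on powers of $b$, and the geometric series $\sum_k b^{-ks}=(1-b^{-s})^{-1}$ it sums is exactly the inversion of your factor $(1-b^{-s})$. However, your recursion in the carry case is wrong, and as written the computation does not produce \eqref{eq-zb-formula}. Writing $e_b(n)=d_b(n)-d_b(n-1)$ and $n=bm$: the last digit of $n-1$ is $b-1$ and rolls over to $0$, while the remaining digits pass from $m-1$ to $m$, so
\begin{equation}
e_b(bm) \;=\; d_b(m)-d_b(m-1)-(b-1) \;=\; e_b(m)-(b-1),
\end{equation}
not $e_b(m)-(b-2)$: the ``$+1$ from the incremented digit'' that you added separately is already contained in $e_b(m)$, so you have double-counted it. With your constant, the functional equation becomes $Z_b(s)=\zeta(s)(1-b^{-s})+b^{-s}\bigl(Z_b(s)-(b-2)\zeta(s)\bigr)$, which solves to $\zeta(s)\,\frac{b^s-(b-1)}{b^s-1}$, not the claimed identity. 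With the correct constant one gets $Z_b(s)=\zeta(s)(1-b^{-s})+b^{-s}\bigl(Z_b(s)-(b-1)\zeta(s)\bigr)$, hence $Z_b(s)(1-b^{-s})=\zeta(s)(1-b^{1-s})$, which is \eqref{eq-zb-formula}. (Note also that the stray extra factor of $b^{-s}$ multiplying $(b-1)\zeta(s)$ in your displayed guess should not be there.) This is precisely the bookkeeping pitfall you flagged, and the proposal as written falls into it.

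A second error: the coefficients are not bounded. Iterating the (corrected) recursion gives $e_b(n)=1-k(b-1)$, where $k$ is the largest integer with $b^k\mid n$; in particular $e_b(b^k)=1-k(b-1)\to-\infty$, so the claim $\lvert e_b(n)\rvert\leq b-1$ is false. What is true is $e_b(n)\ll\log n$, which yields $\sigma_a\leq 1$; and boundedness alone could never give the other half, since you also need a lower bound---namely $\lvert e_b(n)\rvert=1$ for every $n$ not divisible by $b$, so that $\sum\lvert e_b(n)\rvert n^{-1}$ diverges and $\sigma_a\geq 1$. This two-sided argument is exactly how the paper pins down $\sigma_a=1$. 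Your treatment of $\sigma_c$, on the other hand, is fine and in fact more careful than the paper's terse statement: $\sum_{n\leq N}e_b(n)=d_b(N)\ll\log N$ gives $\sigma_c\leq 0$, and a pole on $\Re(s)=0$ rules out $\sigma_c<0$. For that last step you can avoid any appeal to the non-vanishing of $\zeta$ at the points $2\pi i m/\log b$, $m\neq 0$ (true, but it requires $\zeta(1+it)\neq 0$ together with the functional equation), by simply using the point $s=0$: there $b^s-1$ vanishes while $b^0-b=1-b\neq 0$ and $\zeta(0)=-\tfrac12\neq 0$, so $Z_b$ has a genuine pole at $s=0$.
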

\begin{proof}
For bases $b\geq 3$, we have $\lvert d_b(n)-d_b(n-1) \rvert \geq 1$ for all $n$; if $b=2$, we have $\lvert d_b(n)-d_b(n-1)\rvert \geq 1$ for at least all odd $n$. Hence $\sigma_a\geq 1$. We also have $d_b(n)-d_b(n-1)\ll \log n$, so $\sigma_a\leq 1$. The abscissa of conditional convergence $\sigma_c=0$ follows from the bound
\begin{equation}
\sum_{m\leq n} \bigl(d_b(m)-d_b(m-1)\bigr) = d_b(n) \ll \log n.
\end{equation}

The effect of adding 1 on the digit sum in base-$b$ arithmetic depends on the divisibility of $n$ by $b$; in particular, we have
\begin{equation}
d_b(n) - d_b(n-1) = 1 - k(b-1)
\end{equation}
where $k$ is the largest integer such that $b^k \mid n$. We may also express this as
\begin{equation}
d_b(n) - d_b(n-1) = \sum_{m\mid n} \alpha(m) \beta(m/n)
\end{equation}
where 
\begin{equation}
\alpha(n) = \begin{cases} 1 &\text{if $n=b^k$ for some $k$}\\ 0 &\text{otherwise} \end{cases} \qquad \beta(n)= \begin{cases} 1-b &\text{if $b\mid n$} \\ 1 &\text{if $b \nmid n$} \end{cases}.
\end{equation}
Then we have, for $\Re(s)>1$,
\begin{equation}
Z_b(s) = \sum_{n=1}^\infty \bigl( d_b(n) - d_b(n-1) \bigr) n^{-s} =  \sum_{n=1}^\infty \Bigl( \sum_{m\mid n} \alpha(m) \beta(m/n) \Bigr) n^{-s}
\end{equation}
Writing the right side as a product of two Dirichlet series, we have
\begin{equation}
Z_b(s) = \sum_{n=1}^\infty \alpha(n) n^{-s} \sum_{n=1}^\infty \beta(n) n^{-s} = \sum_{n=1}^\infty b^{-ns} \sum_{n=1}^\infty (1-b) (bn)^{-s}.
\end{equation}
Summing the geometric series, we obtain
\begin{equation}
Z_b(s) = \frac{1}{1-b^{-s}} \bigl( \zeta(s) - bb^{-s} \zeta(s)\bigr) =  \frac{b^s-b}{b^s-1}\zeta(s)
\end{equation}
as claimed. Equation \eqref{eq-zb-formula} then provides a meromorphic continuation of $Z_b(s)$ since the right side is meromorphic on $\CC$.
\end{proof}

We will obtain information about the mermorphic continuation of $F_b(s)$ and $G_b(s)$ by considering the relation between these series and the series $Z_b(s)$. For future use, we list the poles of the function $Z_b(s)$.

\begin{lem}\label{lem-poles-of-ndb-ds}
The function $Z_b(s)$ is meromorphic on $\CC$, with simple poles at $s=2\pi i m / \log b$ for $m\in\ZZ$. The residue at each pole is
\begin{equation}
\Res\biggl(Z_b(s),s=\frac{2\pi i m}{\log b}\biggr) = - \frac{b-1}{\log b} \zeta\biggl(\frac{2\pi i m}{\log b}\biggr).
\end{equation}
In particular, at $s=0$, the function $Z_b(s)$ has a Laurent expansion beginning
\begin{equation}
Z_b(s) = \biggl(\frac{b-1}{2\log b}\biggr)s^{-1} + \biggl(- \frac{b+1}{4} + \frac{b-1}{2\log b} \log(2\pi) \biggr) + O(s).
\end{equation}
\end{lem}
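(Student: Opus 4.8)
The plan is to extract everything directly from the closed form
\begin{equation}
Z_b(s) = \frac{b^s-b}{b^s-1}\,\zeta(s)
\end{equation}
established in Proposition \ref{prop-differenced-zeta}, which already supplies the meromorphic continuation to $\CC$. The only candidate poles are the zeros of the denominator $b^s-1$ and the pole of $\zeta(s)$ at $s=1$. The equation $b^s=1$ holds precisely when $s\log b\in 2\pi i\ZZ$, that is, at the points $s_m=2\pi i m/\log b$ for $m\in\ZZ$; at each such point $b^s-1$ has a \emph{simple} zero, since $\frac{d}{ds}(b^s-1)=b^s\log b=\log b\neq 0$ there. The pole of $\zeta$ at $s=1$ causes no trouble: the numerator $b^s-b$ vanishes at $s=1$, so its simple zero cancels the simple pole of $\zeta$, and $Z_b(s)$ is regular at $s=1$. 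This confirms that the full pole set is exactly $\{s_m : m\in\ZZ\}$.

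For the residues I would apply the standard formula $\Res(g/h,s_0)=g(s_0)/h'(s_0)$ with $g(s)=(b^s-b)\zeta(s)$ and $h(s)=b^s-1$. For each $m\in\ZZ$ one has $b^{s_m}=1$ and $h'(s_m)=b^{s_m}\log b=\log b$, so
\begin{equation}
\Res\bigl(Z_b(s),s_m\bigr)=\frac{(b^{s_m}-b)\,\zeta(s_m)}{b^{s_m}\log b}=\frac{(1-b)\,\zeta(s_m)}{\log b}=-\frac{b-1}{\log b}\,\zeta\biggl(\frac{2\pi i m}{\log b}\biggr).
\end{equation}
For $m\neq 0$ the point $s_m$ is purely imaginary and distinct from $1$, so $\zeta(s_m)$ is finite and $b^{s_m}-b=1-b\neq 0$, giving a genuine simple pole. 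The case $m=0$ is formally identical, and since $\zeta(0)=-\tfrac12$ the formula returns $-\frac{b-1}{\log b}\zeta(0)=\frac{b-1}{2\log b}$, which will match the leading Laurent coefficient at $s=0$.

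It then remains only to pin down the constant term of the Laurent expansion at $s=0$, which I would obtain by multiplying out local expansions. Writing $b^s=e^{s\log b}=1+s\log b+\tfrac12 s^2(\log b)^2+\cdots$, a short computation gives
\begin{equation}
\frac{b^s-b}{b^s-1}=-\frac{b-1}{\log b}\,s^{-1}+\frac{b+1}{2}+O(s),
\end{equation}
while the Taylor expansion of the remaining factor is $\zeta(s)=\zeta(0)+\zeta'(0)\,s+O(s^2)$ with the classical values $\zeta(0)=-\tfrac12$ and $\zeta'(0)=-\tfrac12\log(2\pi)$. Multiplying these two expansions and collecting the $s^{-1}$ and $s^0$ terms produces the stated expansion: the $s^{-1}$ term of the rational factor against the linear term of $\zeta$ contributes $\frac{b-1}{2\log b}\log(2\pi)$, and the constant term of the rational factor against $\zeta(0)$ contributes $-\frac{b+1}{4}$.

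The only genuinely delicate point is the bookkeeping in this final step: one must carry the expansion of the rational prefactor $\frac{b^s-b}{b^s-1}$ through order $s^0$, which requires the quadratic term of $b^s-1$ in the denominator, and one must invoke the value $\zeta'(0)=-\tfrac12\log(2\pi)$. Everything else follows immediately from the factored form of $Z_b(s)$, so I expect no real obstacle beyond this routine but careful expansion.
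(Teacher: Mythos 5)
Your proposal is correct and follows essentially the same route as the paper: both arguments extract everything from the closed form $Z_b(s)=\frac{b^s-b}{b^s-1}\zeta(s)$, observe that the simple zero of $b^s-b$ at $s=1$ cancels the pole of $\zeta(s)$, compute the residues at $s_m=2\pi i m/\log b$ (the paper takes the residue $-\frac{b-1}{\log b}$ of the rational prefactor and multiplies by $\zeta(s_m)$, you apply the $g/h'$ rule to the whole function --- the same computation), and obtain the expansion at $s=0$ by multiplying the identical two expansions of $\frac{b^s-b}{b^s-1}$ and $\zeta(s)$. The only remark worth making is that, exactly like the paper, you implicitly use $\zeta(2\pi i m/\log b)\neq 0$ for $m\neq 0$ (which follows from the non-vanishing of $\zeta$ on $\Re(s)=1$ together with the functional equation) when asserting that these points are genuinely poles rather than removable singularities.
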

\begin{proof}
The function $(b^s-b)/(b^s-1)$ has simple poles at $s=2\pi i m /\log b$ for each $m\in\ZZ$, with residue
\begin{equation}
\Res\biggl(\frac{b^s-b}{b^s-1},s=\frac{2\pi i m}{\log b}\biggr) = -\frac{b-1}{\log b}.
\end{equation}
The Laurent expansion at $s=0$ follows from multiplying the expansions
\begin{equation}
\frac{b^s-b}{b^s-1} = -\frac{b-1}{\log b}s^{-1} + \frac{b+1}{2} + O(s)
\end{equation}
and
\begin{equation}
\zeta(s) = -\frac{1}{2} -\frac{1}{2}\log(2\pi)s + O(s^2).
\end{equation}
The function $\zeta(s)$ has only a simple pole at $s=1$, cancelled by a zero of $(b^s-b)$.
\end{proof}

\section{Meromorphic continuation of $F_b(s)$ and $G_b(s)$}\label{sec-mero-cont}

In this section, we show that for integers $b\geq 2$, the Dirichlet series $F_b(s)$ and $G_b(s)$ have a meromorphic continuation to $\CC$ and determine the structure of the poles, proving Theorems \ref{thm-db} and \ref{thm-sb}.

\subsection{Bernoulli numbers}
Our formulas for the meromorphic continuation of $F_b(s)$ and $G_b(s)$ involve Bernoulli numbers. For standard facts about the Bernoulli numbers and their basic properties, see \cite[Ch.~ 23]{abramowitz-stegun}. For a thorough reference on Bernoulli numbers, their history, and their relation to zeta functions, see \cite{arakawa-et-al}.

The Bernoulli numbers $B_k$ are the sequence of rational numbers defined by the generating function
\begin{equation}\label{eq-bernoulli-gf}
\frac{x}{e^x-1} = \sum_{k=0}^\infty \frac{B_k}{k!}x^k.
\end{equation}
If $x$ is a complex variable, this series converges for $\lvert x \rvert < 2\pi$.

Note that there are several competing notations for the Bernoulli numbers; with our definition, we have $B_0=1$, $B_1=-1/2$, and $B_2=1/6$. Because the function
\begin{equation}
\frac{x}{e^x-1} + \frac{1}{2}x
\end{equation}
is an even function, we find that $B_{2k+1}=0$ for all $k\geq 1$.

\subsection{Power series and Dirichlet series} To prove the meromorphic continuation of $F_b(s)$ and $G_b(s)$, we make use of the following classical relation between Dirichlet series and power series.

\begin{prop}\label{prop-ps-ds-relation}
Let $\sigma_c$ be the abscissa of conditional convergence of the Dirichlet series $\sum_{n=1}^\infty a_n n^{-s}$. Then
\begin{equation}
\Gamma(s) \sum_{n=1}^\infty a_n n^{-s} = \int_0^\infty \Bigl( \sum_{n=1}^\infty a_n e^{-nx} \Bigr) x^{s-1} \, dx
\end{equation}
for $\Re(s) >\max(\sigma_c,0)$.
\end{prop}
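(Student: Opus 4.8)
The plan is to reduce the identity to the Mellin transform of the Gamma function and then to justify an interchange of summation and integration, the delicate point being that the stated range $\Re(s)>\max(\sigma_c,0)$ reaches into the strip where the Dirichlet series may converge only conditionally. First I would record the elementary identity obtained by the substitution $t=nx$ in Euler's integral $\Gamma(s)=\int_0^\infty e^{-t}t^{s-1}\,dt$, valid for $\Re(s)>0$, namely
\[
\Gamma(s)\,n^{-s} = \int_0^\infty e^{-nx}x^{s-1}\,dx.
\]
Multiplying by $a_n$ and summing, the desired formula is exactly the assertion that $\sum_n$ and $\int_0^\infty$ may be exchanged in $\sum_n a_n\int_0^\infty e^{-nx}x^{s-1}\,dx$. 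For $\Re(s)>\sigma_a$ this is immediate from Fubini--Tonelli, so the real content lies in the conditionally convergent region $\max(\sigma_c,0)<\Re(s)\le\sigma_a$, where $\sum_n\lvert a_n\rvert n^{-\Re(s)}$ may diverge and a naive Fubini argument is unavailable.

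The tool for handling this is the growth of the summatory coefficients $A(N)=\sum_{n\le N}a_n$. By the standard theory relating the abscissa of convergence to partial sums (Hardy--Riesz \cite{hardy-riesz}; Titchmarsh \cite{titchmarsh-tof}), one has $A(N)=O(N^{\sigma_c+\epsilon})$ for every $\epsilon>0$ when $\sigma_c\ge0$, while $A(N)$ is bounded when $\sigma_c<0$. Fixing $s$ with $\Re(s)>\max(\sigma_c,0)$, I can therefore choose a constant $c$ with $0\le c<\Re(s)$ for which $A(N)=O(N^c)$.

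The next step is to convert conditional convergence into absolute convergence by partial summation. For each fixed $x>0$ the series for $A(n)$ converge absolutely, and rearranging gives
\[
g(x) := \sum_{n=1}^\infty a_n e^{-nx} = (1-e^{-x})\sum_{n=1}^\infty A(n)e^{-nx}.
\]
Since $A(n)=O(n^c)$, near $x=0$ one has $\sum_n\lvert A(n)\rvert e^{-nx}=O(x^{-(c+1)})$, whence $g(x)=O(x^{-c})$ as $x\to0^+$ and $g(x)$ decays exponentially as $x\to\infty$. Consequently the majorized integral
\[
\int_0^\infty (1-e^{-x})\sum_{n=1}^\infty\lvert A(n)\rvert e^{-nx}\,x^{\Re(s)-1}\,dx
\]
converges, its integrand being $O(x^{\Re(s)-c-1})$ near $0$ with $\Re(s)-c>0$. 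With genuine absolute convergence now in place, Fubini--Tonelli legitimizes the interchange, and using $\int_0^\infty(e^{-nx}-e^{-(n+1)x})x^{s-1}\,dx=\Gamma(s)\bigl(n^{-s}-(n+1)^{-s}\bigr)$ yields
\[
\int_0^\infty g(x)x^{s-1}\,dx = \Gamma(s)\sum_{n=1}^\infty A(n)\bigl(n^{-s}-(n+1)^{-s}\bigr).
\]
A final summation by parts, valid because $A(N)(N+1)^{-s}=O(N^{c-\Re(s)})\to0$, collapses the right-hand side to $\Gamma(s)\sum_n a_n n^{-s}$, completing the argument.

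The main obstacle, and the only point requiring care, is precisely this interchange in the conditionally convergent strip. The partial-summation identity for $g(x)$ is what trades conditional convergence of $\sum a_n n^{-s}$ for the polynomial growth of $A(N)$, which is tame enough to be integrated absolutely against the exponentially decaying kernel $e^{-nx}$; the remaining manipulations are routine.
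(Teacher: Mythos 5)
Your proof is correct. For context: the paper itself does not prove this proposition at all — its ``proof'' is a one-line citation to Montgomery--Vaughan (eq.~5.23), so there is no argument in the paper to compare against step by step. Your write-up is essentially the standard textbook argument that the citation points to, in a discrete form: the only genuine issue is the strip $\max(\sigma_c,0)<\Re(s)\leq\sigma_a$, and you resolve it exactly the right way, converting conditional convergence into the polynomial bound $A(N)=O(N^c)$ with $0\leq c<\Re(s)$, rewriting $\sum_n a_n e^{-nx}=(1-e^{-x})\sum_n A(n)e^{-nx}$ by Abel summation, and then running Fubini--Tonelli against the exponentially decaying kernel before summing by parts again at the end. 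All the quantitative steps check out: the dichotomy ($A(N)=O(N^{\sigma_c+\epsilon})$ if $\sigma_c\geq 0$, $A(N)$ bounded if $\sigma_c<0$) is the correct form of the Hardy--Riesz relation between $\sigma_c$ and partial sums, the bound $g(x)=O(x^{-c})$ as $x\to 0^+$ makes the majorized integral converge precisely because $\Re(s)-c>0$, and the final telescoping $\sum_{n\leq N}A(n)\bigl(n^{-s}-(n+1)^{-s}\bigr)=\sum_{n\leq N}a_n n^{-s}-A(N)(N+1)^{-s}$ closes the loop since the boundary term vanishes. An alternative route (which some references take) is to prove the identity only for $\Re(s)>\sigma_a$ by Fubini and then extend by analytic continuation, using the same bound on $A(N)$ to show the integral is holomorphic in the larger half-plane; your approach instead establishes the identity pointwise in the conditional strip, which is slightly more work at each $s$ but avoids invoking uniqueness of analytic continuation. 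Either is fine; yours has the advantage of being self-contained, which the paper's citation-only proof is not.
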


\begin{proof}
See \cite[eq.~5.23]{montgomery-vaughan}.
\end{proof}

Proposition \ref{prop-ps-ds-relation} allows us to translate the additive relations between the arithmetic functions $d_b(n)-d_b(n-1)$, $d_b(n)$, and $S_b(n)$, which are easily expressed in terms of power series generating functions, into relations between their associated Dirichlet series.

\subsection{Meromorphic continuation of $F_b(s)$} We now prove the meromorphic continuation of the Dirichlet series $F_b(s)$ by combining the relation between the Dirichlet series and power series generating functions of $d_b(n)$ with the relation between the power series generating functions of $d_b(n)$ and $d_b(n)-d_b(n-1)$.

\begin{proof}[Proof of Theorem \ref{thm-db}]
Let
\begin{equation}\label{eq-px-def}
p(x) = \sum_{n=1}^\infty \bigl(d_b(n)-d_b(n-1)\bigr) x^n.
\end{equation}
We note that
\begin{equation}
\sum_{n=1}^\infty d_b(n)x^n = \frac{p(x)}{1-x}.
\end{equation}
Then by Proposition \ref{prop-ps-ds-relation}, we have
\begin{equation}
\Gamma(s) F_b(s) = \int_0^\infty \frac{1}{1-e^{-x}} p(e^{-x})x^{s-1}\, dx
\end{equation}
for $\Re(s)>1$. The series expansion
\begin{equation}
\frac{x}{1-e^{-x}} = \sum_{k=0}^\infty \frac{(-1)^kB_k}{k!} x^k,
\end{equation}
which follows from \eqref{eq-bernoulli-gf}, holds for $\lvert x \rvert<2\pi$. Since
\begin{equation}
\Gamma(s)Z_b(s) = \int_0^\infty p(e^{-x})x^{s-1}\, dx,
\end{equation}
for $\Re(s)>0$, we can write
\begin{equation}\label{eq-first-fb-expansion}
F_b(s) = \sum_{k=0}^K \frac{(-1)^kB_k}{k!} \frac{\Gamma(s-1+k)}{\Gamma(s)} Z_b(s-1+k) + R_K(s)
\end{equation}
with
\begin{equation}\label{eq-RK-def}
R_K(s) = \frac{1}{\Gamma(s)}\int_0^\infty \Bigl(\frac{x}{1-e^{-x}} - \sum_{k=0}^K \frac{(-1)^k B_k}{k!} x^k \Bigr) p(e^{-x})x^{s-2} \, dx.
\end{equation}
Note that
\begin{equation}
\frac{\Gamma(s-1)}{\Gamma(s)} = \frac{1}{s-1}
\end{equation}
and
\begin{equation}
\frac{\Gamma(s-1+k)}{\Gamma(s)} =  (s)(s+1)\dotsm (s+k-2).
\end{equation}
Since
\begin{equation}
\frac{x}{1-e^{-x}} - \sum_{k=0}^K \frac{(-1)^k B_k}{k!} x^k \ll x^{K+1}
\end{equation}
as $x\rightarrow 0^+$, the integral in \eqref{eq-RK-def} converges and defines a holomorphic function in the region $\Re(s)>1-K$. From Lemma \ref{lem-poles-of-ndb-ds} we know that $Z_b(s)$ has simple poles at $s=2\pi ik/\log b$ for $k\in\ZZ$. The $k=0$ term
\begin{equation}
\frac{1}{s-1}Z_b(s-1)
\end{equation}
has a double pole at $s=1$ with Laurent expansion beginning
\begin{equation}
\frac{1}{s-1}Z_b(s-1) = \frac{b-1}{2\log b}(s-1)^{-2} + \biggl( \frac{b-1}{2\log b} \log(2\pi) - \frac{b+1}{4}\biggr)(s-1)^{-1} + O(1),
\end{equation}
and simple poles at each other point $s=1+2\pi i m /\log b$. Each term
\begin{equation}
(-1)^k\frac{B_k}{k!}\prod_{j=0}^{k-2}(s+j) \cdot Z_b(s-1+k)
\end{equation}
with $k=1$ or with $k$ an even integer with $k\geq 2$ has a simple pole at $s=1-k+2\pi i m/\log b$ for $m\in\ZZ$ with residue
\begin{equation}
(-1)^{k}\frac{B_k}{k!} \prod_{j=1}^{k-1} \biggl(\frac{2\pi i m}{\log b} - j\biggr) \cdot \biggl(-\frac{b-1}{\log b}\biggr)\zeta\biggl(\frac{2\pi i m}{\log b}\biggr).
\end{equation}
Since $K$ can be taken arbitrarily large, this proves the theorem.
\end{proof}

\subsection{Meromorphic continuation of $G_b(s)$} We continue the function $G_b(s)$ to the plane in a similar fashion, using the fact that $S_b(n)$ is a double sum of the differences $d_b(n)-d_b(n-1)$ appearing in the series $Z_b(n)$.

\begin{proof}[Proof of Theorem \ref{thm-sb}]
Define $p(x)$ by \eqref{eq-px-def}. We make use of the identity of power series
\begin{equation}
\frac{x}{(1-x)^2} p(x) = \sum_{n=1}^\infty S_b(n)x^n.
\end{equation}
Then by Proposition \ref{prop-ps-ds-relation}, we have
\begin{equation}
\Gamma(s)G_b(s) = \int_0^\infty \frac{e^{-x}}{(1-e^{-x})^2} p(e^{-x}) x^{s-1} \, dx
\end{equation}
for $\Re(s)>2$. From \eqref{eq-bernoulli-gf} and noting that
\begin{equation}
\frac{e^{-x}}{(1-e^{-x})^2}=-\frac{d}{dx}\biggl(\frac{1}{e^x-1}\biggr),
\end{equation}
we have the power series expansion
\begin{equation}
\frac{x^2e^x}{(e^x-1)^2} = 1-\sum_{k=2}^\infty \frac{B_k}{k(k-2)!}x^k.
\end{equation}
Then for a fixed integer $K\geq 2$ can write $G_b(s)$ as
\begin{equation}\label{eq-gbs-expansion}
G_b(s) = \frac{\Gamma(s-2)}{\Gamma(s)}Z_b(s-2) - \sum_{k=2}^K \frac{B_k}{k(k-2)!} \frac{\Gamma(s-2+k)}{\Gamma(s)} Z_b(s-2+k) + R_K(s)
\end{equation}
with remainder $R_K$ given by
\begin{equation}
R_K(s) = \frac{1}{\Gamma(s)}\int_0^\infty \biggl( \frac{x^2e^{-x}}{(1-e^{-x})^2} - 1 + \sum_{k=2}^K \frac{B_k}{k(k-2)!} x^k \biggr) p(e^{-x}) x^{s-3} \, dx.
\end{equation}
The function $R_K(s)$ is holomorphic for $\Re(s)>2-K$.

As before, we consider the poles of each term of \eqref{eq-gbs-expansion}. The first term
\begin{equation}
\frac{1}{(s-1)(s-2)} Z_b(s-2)
\end{equation}
has a double pole at $s=2$ with Laurent expansion
\begin{equation}
\frac{b-1}{2\log b}(s-2)^{-2} + \biggl(\frac{b-1}{2\log b}\bigl(\log(2\pi)-1\bigr)-\frac{b+1}{4}\biggr)(s-2)^{-1}+ \dotsb,
\end{equation}
a simple pole at each point $s=2+2\pi i m / \log b$ with $m\neq 0$, and a simple pole at $s=1$ with residue $(b+1)/12$. Each other term
\begin{equation}
\frac{B_k}{k(k-2)!} \prod_{j=0}^{k-3} (s+j) \cdot Z_b(s-2+k).
\end{equation}
has simple poles at $s=2-k+2\pi i m / \log b$ for all $m$.
\end{proof}

\begin{rem}
Instead of relating $G_b(s)$ to the series $Z_b(s)$ as we did in this proof, we could have also used the relation
\[ \Gamma(s) G_b(s) = \int_0^\infty \frac{e^{-x}}{1-e^{-x}} \biggl(\sum_{n=1}^\infty d_b(n) e^{-xn}\biggr) x^{s-1} \, dx, \]
following the proof of Theorem \ref{thm-db} to write $G_b(s)$ in terms of $F_b(s)$.
\end{rem}

\subsection{Order of $F_b(s)$ and $G_b(s)$ as meromorphic functions}\label{sec-meromorphic-functions}

The functions $F_b(s)$ and $G_b(s)$ are meromorphic functions on the complex plane with infinitely many poles on a left half-lattice. We now raise a further question about the analytic properties of these functions. Recall that the \emph{order} of an entire function $f(z)$ on $\CC$ is
\[ \rho = \inf \bigl\{ \rho\geq 0 \, \bigm\vert \, f(z) = O\bigl(\exp(\lvert z \rvert^{\rho+\varepsilon})\bigr)\text{ as $\lvert z \rvert\rightarrow\infty$} \bigr\}. \]
An entire function is of \emph{finite order} if $\rho<\infty$.

The order of a meromorphic function is defined as the order of growth of its associated Nevanlinna characteristic function. This definition is equivalent (by \cite{rubel} Lemma 15.6 and Theorem on p.\ 91) to the following.

\begin{defn}
The \emph{order} of a meromorphic function $f(z)$ is the infimum of $\rho\geq 0$ such that $f$ can be written as $f(z) = g(z)/h(z)$ for entire functions $g(z)$ and $h(z)$ of order $\rho$.
\end{defn}

Many of the common Dirichlet series of analytic number theory are meromorphic (or entire) functions of order $1$. Examples include the Riemann zeta function $\zeta(s)$, the Dirichlet $L$-functions $L(s,\chi)$, and their relatives; more generally, all Dirichlet series in the Selberg class (as introduced in Selberg \cite{selberg-92}) are meromorphic functions of order 1.

By Proposition \ref{prop-differenced-zeta}, the function $Z_b(s)$ is meromorphic of order $1$. The functions $F_b(s)$ and $G_b(s)$, however, must have greater order by the following fact.

\begin{prop}[{\cite[p.~284]{titchmarsh-tof}}]
Let $f(z)$ be a meromorphic function of order $\rho$ and let $n(r,a)$ be the number of zeros of $f(z)-a$ in the disc $\lvert z \rvert<r$. Then $n(r,a)=O(r^{\rho+\varepsilon})$.
\end{prop}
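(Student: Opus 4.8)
The plan is to reduce the statement to counting zeros of a single entire function and then invoke Jensen's formula. Fix $a\in\CC$ and $\varepsilon>0$, and assume $f$ is nonconstant (the constant case being trivial). By the definition of order adopted above, for any $\delta>0$ we may write $f(z)=g(z)/h(z)$ with $g$ and $h$ entire of order at most $\rho+\delta$; I would take $\delta=\varepsilon/2$. Set $G_a(z):=g(z)-a\,h(z)$, which is entire and not identically zero. Every $a$-point of $f$ satisfies $g(z)-a\,h(z)=0$ (at a point where $h\neq 0$ this is the equation $f(z)=a$, and at a removable singularity where $h(z)=0$ and $f(z)=a$ one still has $g(z)=0$), so the zeros of $f(z)-a$ are contained, with multiplicity, in the zero set of $G_a$. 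Consequently
\[ n(r,a) \leq n_{G_a}(r), \]
where $n_{G_a}(r)$ is the number of zeros of $G_a$ in $\lvert z\rvert<r$, and it suffices to bound $n_{G_a}(r)$.

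The key observation is that $G_a$ is a linear combination of two entire functions of order at most $\rho+\varepsilon/2$, so from $M(r,G_a)\leq M(r,g)+\lvert a\rvert M(r,h)$ one sees that $G_a$ is itself entire of order at most $\rho+\varepsilon/2$; in particular $\log M(r,G_a)=O(r^{\rho+\varepsilon})$ as $r\to\infty$. Assuming first $G_a(0)\neq 0$, Jensen's formula gives
\[ \int_0^r \frac{n_{G_a}(t)}{t}\,dt = \frac{1}{2\pi}\int_0^{2\pi} \log\bigl\lvert G_a(re^{i\theta})\bigr\rvert\,d\theta - \log\lvert G_a(0)\rvert \leq \log M(r,G_a) - \log\lvert G_a(0)\rvert, \]
so the left-hand integral is $O(r^{\rho+\varepsilon})$. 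If instead $G_a(0)=0$, I would first factor out the zero of finite order $m$ at the origin, writing $G_a(z)=z^m\widetilde{G}_a(z)$ with $\widetilde{G}_a(0)\neq 0$, which alters the count by the fixed constant $m$ and leaves the asymptotic unchanged.

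Finally I would pass from the integrated count back to $n_{G_a}(r)$ itself using the monotonicity of $t\mapsto n_{G_a}(t)$: since $n_{G_a}$ is nondecreasing,
\[ n_{G_a}(r)\log 2 = n_{G_a}(r)\int_r^{2r}\frac{dt}{t} \leq \int_r^{2r}\frac{n_{G_a}(t)}{t}\,dt \leq \int_0^{2r}\frac{n_{G_a}(t)}{t}\,dt = O\bigl((2r)^{\rho+\varepsilon}\bigr)=O(r^{\rho+\varepsilon}). \]
Combining this with $n(r,a)\leq n_{G_a}(r)$ yields $n(r,a)=O(r^{\rho+\varepsilon})$, as claimed. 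The analytic core here---Jensen's formula together with the doubling trick converting $N$ into $n$---is entirely standard, so I expect no deep obstacle; the one point demanding care is the bookkeeping of the two $\varepsilon$-type losses, one coming from the infimum in the definition of the order of $f$ and one from the growth estimate for the order-$(\rho+\varepsilon/2)$ function $G_a$, which must be balanced so their sum is the prescribed $\varepsilon$. An alternative route would start from the Nevanlinna definition of order and use the First Fundamental Theorem $T\bigl(r,(f-a)^{-1}\bigr)=T(r,f)+O(1)$ to bound $N\bigl(r,(f-a)^{-1}\bigr)\leq T(r,f)+O(1)=O(r^{\rho+\varepsilon})$, followed by the same doubling argument; the equivalence of the two notions of order cited from Rubel is precisely what permits choosing whichever formulation is more convenient.
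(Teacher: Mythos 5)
Your proof is correct; note that the paper itself offers no proof of this proposition at all---it is quoted from Titchmarsh and used as a black box---so the comparison is with the standard textbook argument behind the citation rather than with anything in the paper. The two routes genuinely differ. You work from the quotient definition of meromorphic order that the paper adopts: write $f=g/h$ with $g,h$ entire of order at most $\rho+\varepsilon/2$, observe that every $a$-point of $f$ is a zero of the entire function $G_a=g-ah$ (including at removable singularities where $h$ vanishes, a case you correctly dispose of), note that $G_a$ again has order at most $\rho+\varepsilon/2$, and count its zeros via Jensen's formula plus the doubling estimate $n_{G_a}(r)\log 2\leq\int_r^{2r}n_{G_a}(t)\,t^{-1}\,dt$. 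This is self-contained, needs nothing beyond Jensen's formula for entire functions, and meshes exactly with the definition of order stated in the paper (Rubel's equivalence is then needed only to know the representation $f=g/h$ exists, which is how the paper frames it). The proof behind the citation instead runs through Nevanlinna theory: $N(r,1/(f-a))\leq T(r,1/(f-a))=T(r,f)+O(1)=O(r^{\rho+\varepsilon})$ by the first fundamental theorem, followed by the same doubling step---precisely the alternative you sketch in your closing remark. That route is uniform in $a$ and covers $a=\infty$ directly, which is mildly relevant here since the paper applies the proposition to count \emph{poles} of $F_b(s)$ and $G_b(s)$; in your framework that case needs the one-line supplement that the poles of $f=g/h$ lie, with multiplicity, among the zeros of $h$, which has the same order bound. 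Your bookkeeping of the two $\varepsilon/2$ losses is handled correctly, so I see no gap.
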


By Theorems \ref{thm-db} and \ref{thm-sb}, the functions $F_b(s)$ and $G_b(s)$ each have $\gg r^2$ poles in the disc $\lvert z \rvert<r$. Hence we have the following corollary.

\begin{cor}
The functions $F_b(s)$ and $G_b(s)$ are meromorphic functions of order at least $2$.
\end{cor}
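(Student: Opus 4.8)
The plan is to chain together two facts already established in the excerpt: the counting bound for poles of a meromorphic function of finite order (the cited Proposition from Titchmarsh, p.~284) and the explicit pole locations furnished by Theorems~\ref{thm-db} and~\ref{thm-sb}. The strategy is a lower-bound argument: I would show that the pole-counting function grows like $r^2$, which is incompatible with any order $\rho<2$, and conclude $\rho\geq 2$ for each of $F_b(s)$ and $G_b(s)$.

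First I would set up the contrapositive form of the Titchmarsh proposition. If $f$ is meromorphic of order $\rho$, then applying the cited result with $a=\infty$ (equivalently, counting poles, which are the zeros of $1/f$), the number $n(r)$ of poles of $f$ in $\lvert s\rvert<r$ satisfies $n(r)=O(r^{\rho+\varepsilon})$ for every $\varepsilon>0$. The key step is then to count the poles of $F_b(s)$ (and similarly $G_b(s)$) inside a disc of radius $r$ using the explicit description in Theorem~\ref{thm-db}. The poles sit at the points $s=1-k+2\pi i m/\log b$, where $k$ ranges over $\{1\}\cup\{2,4,6,\dotsc\}$ and $m$ ranges over $\ZZ$, together with the vertical line $k=0$. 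I would bound below the number of lattice points $(k,m)$ with $\lvert 1-k+2\pi i m/\log b\rvert<r$. Restricting to $0\le k\le r/2$ and $\lvert m\rvert\le (r\log b)/(4\pi)$ already forces $\lvert s\rvert<r$ for large $r$, so the count of admissible pairs is $\gg r\cdot r=r^2$; only a constant fraction of $k$-values are excluded by the parity constraint, so this does not affect the order of magnitude. Hence $n(r)\gg r^2$. The identical estimate applies to $G_b(s)$, whose poles lie at $s=2-k+2\pi i m/\log b$ with $k$ even, $k\geq 2$, plus the lines at $k=0$ and the single extra pole at $s=1$.

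Combining the two, if either function had order $\rho<2$, choosing $\varepsilon$ with $\rho+\varepsilon<2$ would give $n(r)=O(r^{\rho+\varepsilon})=o(r^2)$, contradicting the lower bound $n(r)\gg r^2$. Therefore each of $F_b(s)$ and $G_b(s)$ has order at least $2$.

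I do not anticipate a serious obstacle here: the argument is essentially a two-dimensional lattice-point count crossed with a standard order-versus-pole-density inequality, and both ingredients are supplied. The only point requiring mild care is the bookkeeping of the parity restriction on $k$ (and the omission of odd $k\geq 3$), to confirm that it merely changes the implied constant rather than the power of $r$; since the retained $k$-values still form a positive-density subset of an interval of length $\asymp r$, the $r^2$ growth is preserved. A second small subtlety is justifying the application of the counting proposition to poles rather than to $a$-points of $f$ itself—this is handled by applying it to $1/f$, or equivalently reading the proposition at $a=\infty$, which is the standard Nevanlinna convention and consistent with the meromorphic-order definition given just above the statement.
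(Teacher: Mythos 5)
Your proposal is correct and matches the paper's own argument: the paper likewise combines the Titchmarsh pole-counting proposition with the $\gg r^2$ lattice of poles provided by Theorems \ref{thm-db} and \ref{thm-sb} to rule out any order $\rho<2$. Your write-up simply makes explicit the lattice-point count and the $a=\infty$ reading of the counting proposition, both of which the paper leaves implicit.
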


A meromorphic function of order greater than 2 could still have only $O(r^2)$ poles in $\lvert z \rvert<r$, so without further information, we cannot deduce that $F_b(s)$ and $G_b(s)$ have order 2.
\begin{quest}
Are the functions $F_b(s)$ and $G_b(s)$ mermorphic functions of order exactly $2$?
\end{quest}

Such a question has been answered positively in the related setting of strongly $q$-multiplicative functions: the Dirichlet series attached to such functions are entire of order exactly $2$ (see Alkauskas \cite{alkauskas-04}).

\section{Meromorphic continuation of Dirichlet series for non-integer bases}\label{sec-non-int}

In this section, we consider the problem of extending the digit sums $d_b(n)$ and $S_b(n)$ from integer bases $b$ to real parameters $\beta>1$. There are a number of possible ways to do this. One natural approach concerns the notion of $\beta$-expansions introduced by Renyi \cite{renyi-57} and studied at length by Parry \cite{parry-60}. However, for non-integer values of $\beta$, the $\beta$-expansion of an integer generally has infinitely many digits, so the sum of the digits will generally be infinite.
Digit sums related to a different digit expansion with respect to an irrational base were considered by Grabner and Tichy \cite{grabner-tichy-91}.

The approach which we consider in this section is to use the formula of Delange to define the cumulative digit sum $S_\beta(n)$ for real parameters $\beta>1$, from which we can define a digit sum $d_\beta(n)$ by differencing. The resulting functions are continuous in the $\beta$-parameter.

\subsection{Extension to non-integer bases by Delange's formula}

We begin by replacing the integer variable $b$ in Theorem \ref{thm-delange}, which gives a formula for $S_b(n)$ for integer bases $b\geq 2$, by a real parameter $\beta>1$.

\begin{defn}\label{def-non-integer}
For $\beta\in\RR$ with $\beta>1$, define a generalized cumulative sum-of-digits function $S_\beta(n)$ by
\begin{equation}\label{eq-sbeta-delange-def}
S_\beta(n) := \frac{\beta-1}{2\log \beta} n \log n + h_\beta\biggl(\frac{\log n}{\log \beta} \biggr) n,
\end{equation}
where the function $h_\beta(x)$ is defined by the Fourier series
\begin{equation}\label{eq-hbeta-fourier-series}
h_\beta(x) = \sum_{k=-\infty}^\infty c_\beta(k)e^{2\pi i k x}
\end{equation}
with coefficients
\begin{equation}
c_\beta(k) = -\frac{\beta-1}{2\pi i k}\biggl(1+\frac{2\pi i k}{\log \beta}\biggr)^{-1}\zeta\biggl(\frac{2\pi i k}{\log \beta} \biggr)
\end{equation}
for $k\neq 0$ and
\begin{equation}
c_\beta(0) = \frac{\beta-1}{2\log \beta}(\log 2\pi - 1) - \frac{\beta+1}{4}.
\end{equation}
\end{defn}
\begin{defn}
Define the generalized sum-of-digits function $d_\beta(n)$ for real $\beta>1$ by
\begin{equation}
d_\beta(n) := S_\beta(n+1) - S_\beta(n).
\end{equation}
\end{defn}

A plot of $S_\beta(10)$ as a function of $\beta$ for $1\leq\beta\leq 15$ is shown in Figure \ref{Sb10-plot}. Note that $S_\beta(n)$ is approximately constant for $\beta\geq 10$.

\begin{figure}\label{Sb10-plot}
\includegraphics[scale=0.5]{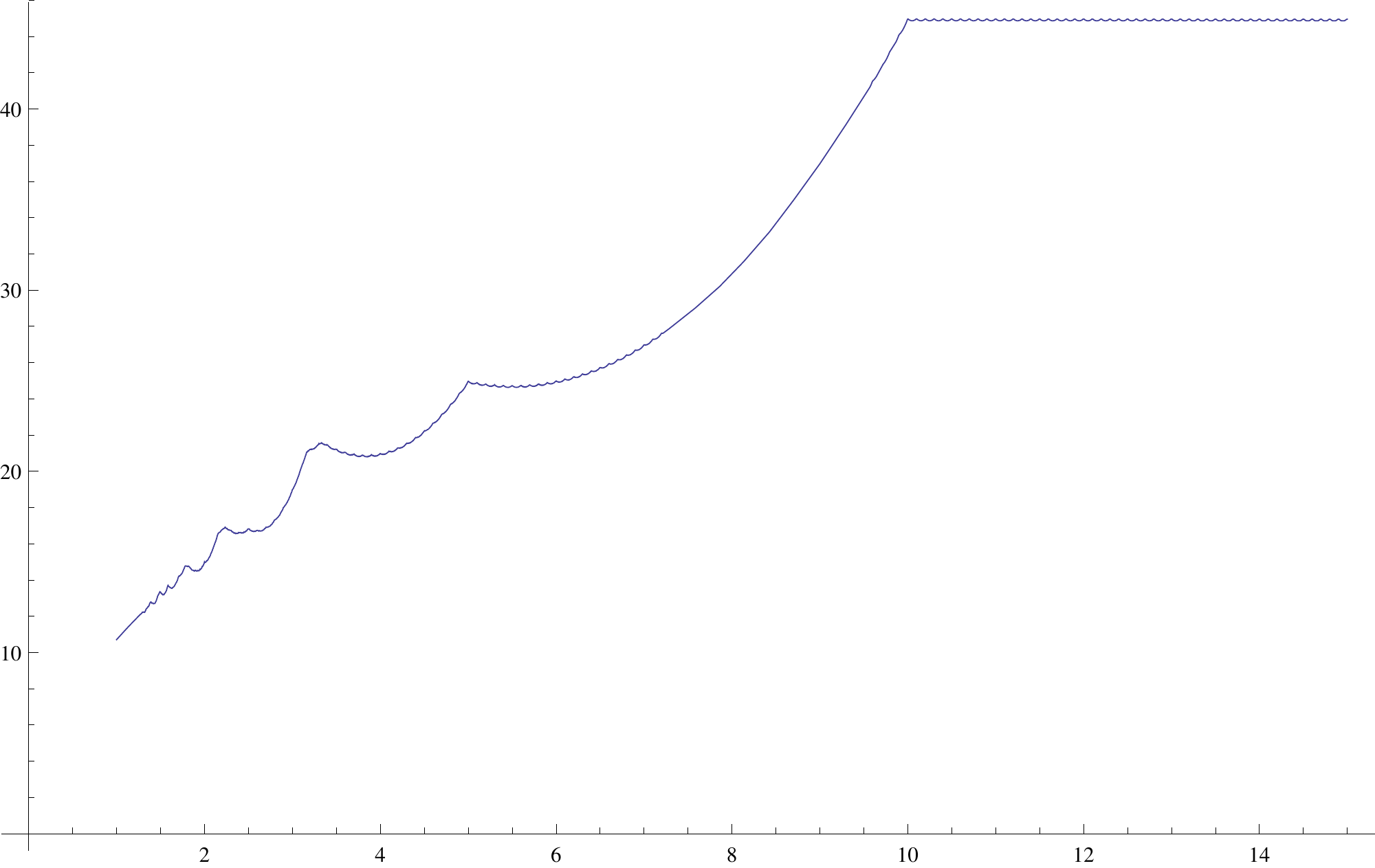}
\caption{A plot of $S_\beta(10)$ for $1\leq\beta\leq 15$, using terms with $\lvert k \rvert\leq 1000$ in the Fourier series for $h_\beta(x)$.}
\end{figure}

\subsection{The function $h_\beta(x)$}

In this section, we study properties of the function $h_\beta(x)$ appearing in Definition \ref{def-non-integer} as a function of the variable $\beta>1$ and as a function of the variable $x$. When $\beta=b\in\NN$, Delange showed that $h_b(x)$ is a continuous but everywhere non-differentiable real-valued function of $x$ with period 1.

\begin{lem}
For each fixed $\beta>1$, the function $h_\beta(x)$ is a real-valued continuous function of $x$ on $\RR$.
\end{lem}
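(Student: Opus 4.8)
The plan is to establish that the Fourier series defining $h_\beta(x)$ converges absolutely and uniformly on $\RR$, from which both continuity and real-valuedness follow at once; the entire argument reduces to a growth estimate for the coefficients $c_\beta(k)$ as $\lvert k\rvert\to\infty$. First I would isolate the elementary factors. For $k\neq 0$,
\[
c_\beta(k) = -\frac{\beta-1}{2\pi i k}\left(1+\frac{2\pi i k}{\log\beta}\right)^{-1}\zeta\left(\frac{2\pi i k}{\log\beta}\right),
\]
and the first two factors together have modulus $\asymp \lvert k\rvert^{-2}$ as $\lvert k\rvert\to\infty$: the first has modulus exactly $(\beta-1)/(2\pi\lvert k\rvert)$, and the modulus of the second is asymptotic to $(\log\beta)/(2\pi\lvert k\rvert)$. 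Note that for $k\neq 0$ the argument $2\pi i k/\log\beta$ is purely imaginary and nonzero, so it avoids the pole of $\zeta$ at $s=1$, while the factor $(1+2\pi i k/\log\beta)^{-1}$ is never singular since its real part equals $1$. Thus the only factor that can grow is the zeta value, evaluated on the vertical line $\Re(s)=0$.

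The analytic heart of the proof is a growth bound for $\zeta$ on the imaginary axis. Using the functional equation $\zeta(s)=\chi(s)\zeta(1-s)$ together with Stirling's formula applied to $\chi(it)$, one obtains $\zeta(it)\ll \lvert t\rvert^{1/2}\log(2+\lvert t\rvert)$; in fact any bound of the form $\zeta(it)=O(\lvert t\rvert^{A})$ with a fixed exponent $A<1$ is all that is required. Substituting $t=2\pi k/\log\beta$ gives $\lvert\zeta(2\pi i k/\log\beta)\rvert\ll \lvert k\rvert^{1/2+\varepsilon}$, and hence
\[
\lvert c_\beta(k)\rvert \ll \lvert k\rvert^{-2+1/2+\varepsilon} = \lvert k\rvert^{-3/2+\varepsilon}.
\]
Since $\sum_{k\neq 0}\lvert k\rvert^{-3/2+\varepsilon}<\infty$, the Weierstrass $M$-test shows that the series converges absolutely and uniformly on $\RR$; as a uniform limit of the continuous functions $c_\beta(k)e^{2\pi i k x}$, the function $h_\beta(x)$ is continuous.

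To see that $h_\beta$ is real-valued, I would verify the Hermitian symmetry $\overline{c_\beta(k)}=c_\beta(-k)$. This follows from the Schwarz reflection principle $\overline{\zeta(s)}=\zeta(\bar s)$ (valid because $\zeta$ has real Dirichlet coefficients) applied to $s=2\pi i k/\log\beta$, whose conjugate is $2\pi i(-k)/\log\beta$, together with the direct conjugation of the two prefactors, namely $\overline{-\frac{\beta-1}{2\pi i k}}=-\frac{\beta-1}{2\pi i(-k)}$ and $\overline{(1+\frac{2\pi i k}{\log\beta})^{-1}}=(1+\frac{2\pi i(-k)}{\log\beta})^{-1}$; and $c_\beta(0)$ is manifestly real. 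Pairing the $k$ and $-k$ terms then shows every symmetric partial sum is real, and real-valuedness passes to the uniform limit.

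I expect the only genuine obstacle to be the growth estimate for $\zeta$ on $\Re(s)=0$; everything else is bookkeeping. It is worth emphasizing that no deep input is needed here—only that $\zeta(it)$ grows strictly more slowly than $\lvert t\rvert$, a fact delivered with room to spare by the convexity bound (or even by the crude bound coming directly from the functional equation, since $\zeta(1-it)$ is bounded near $\Re(s)=1$ and $\chi(it)$ grows like $\lvert t\rvert^{1/2}$).
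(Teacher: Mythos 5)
Your proposal is correct and follows essentially the same route as the paper: both rest on the bound $\zeta(it)\ll \lvert t\rvert^{1/2+\varepsilon}$ on the imaginary axis, which yields $c_\beta(k)\ll_\beta \lvert k\rvert^{-3/2+\varepsilon}$, hence absolute and uniform convergence of the Fourier series and continuity, with real-valuedness coming from the Hermitian symmetry $\overline{c_\beta(k)}=c_\beta(-k)$. Your write-up merely makes explicit the steps the paper leaves implicit (the modulus computation for the two elementary factors, the reflection principle for $\zeta$, and the pairing of the $k$ and $-k$ terms).
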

\begin{proof}
The zeta function satisfies the bound
\begin{equation}
\lvert\zeta(it)\rvert\ll t^{1/2+\varepsilon}
\end{equation}
for $t\in\RR$ (see for example \cite[eq.~5.1.3]{titchmarsh-zeta}, so the Fourier coefficients of $h_\beta$ satisfy
\begin{equation}
c_\beta(k)= -\frac{\beta-1}{2\pi i k}\biggl(1+\frac{2\pi i k}{\log \beta}\biggr)^{-1}\zeta\biggl(\frac{2\pi i k}{\log \beta} \biggr) \ll_\beta k^{-3/2+\varepsilon}.
\end{equation}
This estimate shows that the Fourier series \eqref{eq-hbeta-fourier-series} is absolutely and uniformly convergent for $x\in\RR$, so gives a continuous function of $x$.

The function $h_\beta(x)$ is real-valued for $x\in\RR$ since the Fourier coefficients $c_\beta(k)$ satisfy $\overline{c_\beta(k)} = c_\beta(-k)$.
\end{proof}

\begin{figure}\label{fb2-plot}
\includegraphics[scale=0.5]{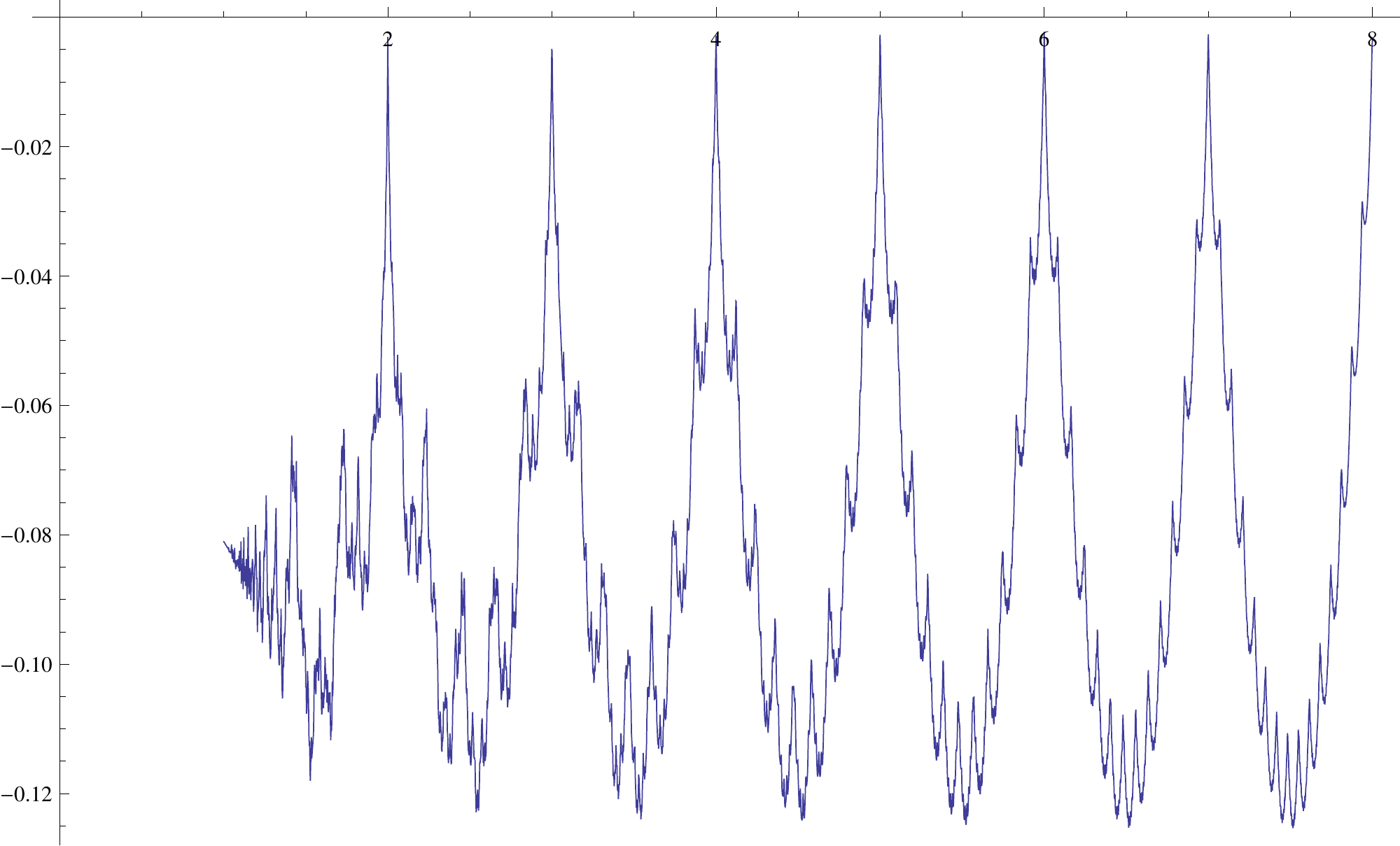}
\caption{A plot of $h_\beta(2)$ for $1\leq\beta\leq 8$, using terms with $\lvert k \rvert\leq 1000$ in the Fourier series for $h_\beta(x)$.}
\end{figure}

\begin{figure}\label{fbl2-plot}
\includegraphics[scale=0.5]{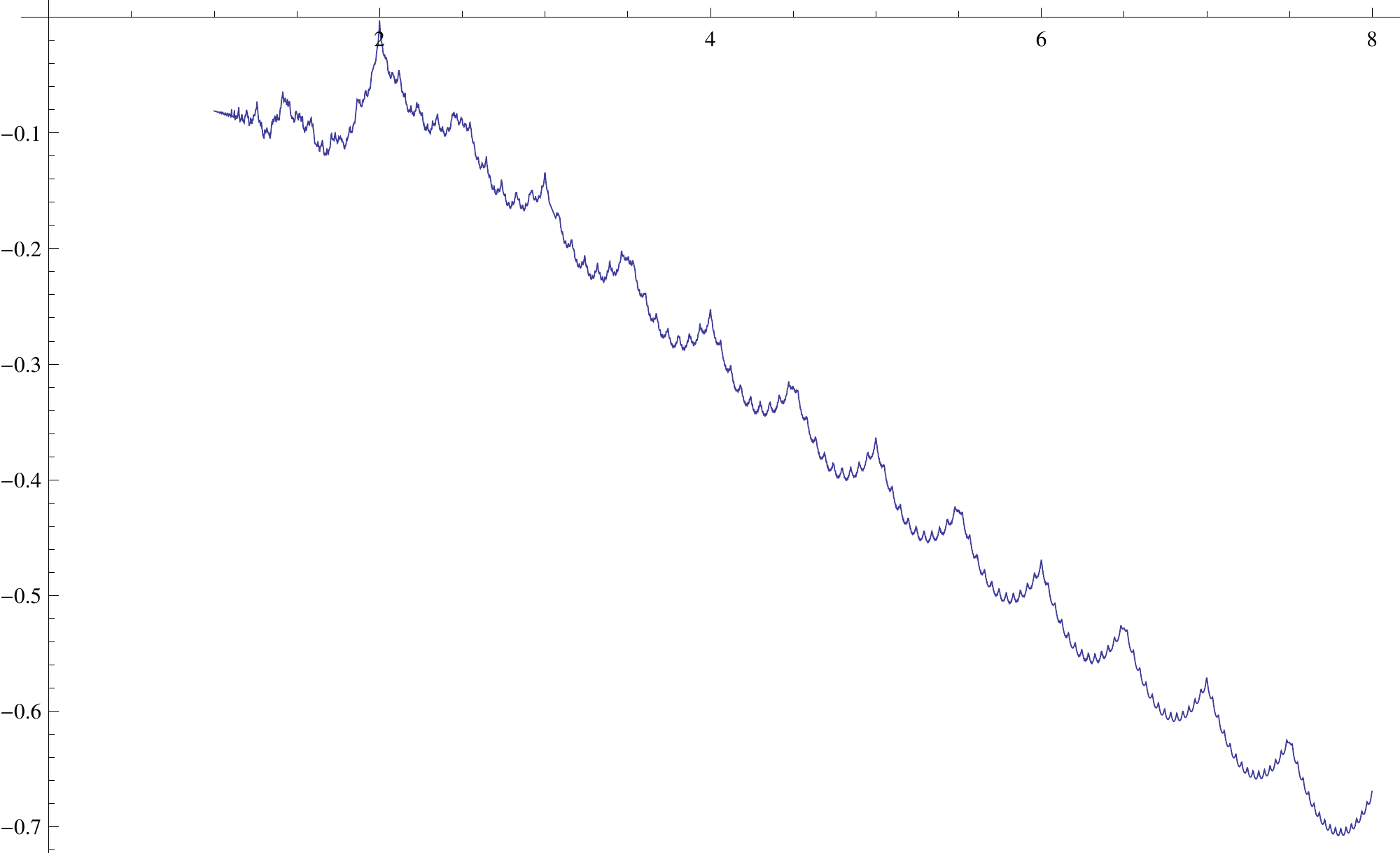}
\caption{A plot of $h_\beta(\log 2 / \log \beta)$ for $1\leq\beta\leq 8$, using terms with $\lvert k \rvert\leq 1000$ in the Fourier series for $h_\beta(x)$.}
\end{figure}

A plot of $h_\beta(2)$ as a function of the real parameter $\beta$ for $1\leq \beta \leq 8$ is shown in Figure \ref{fb2-plot}. From the plot, it also appears that $h_\beta$ might be non-differentiable as a function of the real parameter $\beta$. 

\begin{quest}
For fixed $x\in\RR$, is the function $h_\beta(x)$ everywhere non-differentiable as a function of the real variable $\beta$?
\end{quest}

\subsection{Meromorphic continuation of $G_\beta(s)$}

Our proofs of the meromorphic continuation of $F_b(s)$ and $G_b(s)$ for integer bases relied on the identity
\begin{equation}
Z_b(s) = \sum_{n=1}^\infty \bigl( d_b(n) - d_b(n-1) \bigr) n^{-s} = \frac{b^s-b}{b^s-1}\zeta(s).
\end{equation}
If for non-integer $\beta>1$ we define
\begin{equation}
Z_\beta(s) \coloneq \sum_{n=1}^\infty \bigl( d_\beta(n) - d_\beta(n-1) \bigr) n^{-s},
\end{equation}
then $Z_\beta(s)$ is \emph{not} equal to
\begin{equation}\label{eq-zbeta-wrong}
\frac{\beta^s-\beta}{\beta^s-1}\zeta(s)
\end{equation}
as \eqref{eq-zbeta-wrong} is not an ordinary Dirichlet series. We must therefore take a different approach.

We first consider the Dirichlet series generating function
\begin{equation}
G_\beta(s) := \sum_{n=1}^\infty \frac{S_\beta(n)}{n^s}
\end{equation}
for $\beta\in\RR$ with $\beta>1$. Since the coefficients satisfy
\begin{equation}
S_\beta(n) \asymp n \log n,
\end{equation}
the Dirichlet series $G_\beta(s)$ has abscissa of absolute convergence $\sigma_a=2$. We show that the function $G_\beta(s)$ can be analytically continued to a larger halfplane.

\begin{thm}
For each real $\beta>1$, the function $G_\beta(s)$ is meromorphic in the region $\Re(s)>1$ with a double pole at $s=2$ with Laurent expansion
\begin{equation}
G_\beta(s) = \frac{\beta - 1}{2\log \beta}(s-2)^{-2} + c_\beta(0) (s-2)^{-1} +O(1)
\end{equation}
and simple poles at $s=2+2\pi i k / \log \beta$ for $k\in\ZZ$ with $k\neq 0$ with residue
\begin{equation}
\Res\biggl(G_b(s),s=2+\frac{2\pi i k}{\log \beta}\biggr) = c_\beta(k),
\end{equation}
where the numbers $c_\beta(k)$ are those in Definition \ref{def-non-integer}.
\end{thm}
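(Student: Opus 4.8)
The plan is to split the coefficient $S_\beta(n)$ according to its defining formula \eqref{eq-sbeta-delange-def} into its smooth main term and its oscillatory term, and to identify each resulting Dirichlet series with the Riemann zeta function or its derivative. Writing
\[ G_\beta(s) = \frac{\beta-1}{2\log\beta}\sum_{n=1}^\infty \frac{\log n}{n^{s-1}} + \sum_{n=1}^\infty \frac{1}{n^{s-1}}\, h_\beta\Bigl(\frac{\log n}{\log\beta}\Bigr), \]
the first sum equals $-\zeta'(s-1)$, so the main term contributes $-\frac{\beta-1}{2\log\beta}\zeta'(s-1)$, which is already meromorphic on $\CC$ and which will supply the double pole at $s=2$.

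For the oscillatory term I would insert the Fourier series \eqref{eq-hbeta-fourier-series} and use the identity $e^{2\pi i k(\log n)/\log\beta}=n^{2\pi i k/\log\beta}$ to obtain, formally,
\[ \sum_{n=1}^\infty \frac{1}{n^{s-1}}\,h_\beta\Bigl(\frac{\log n}{\log\beta}\Bigr) = \sum_{k=-\infty}^\infty c_\beta(k)\,\zeta\Bigl(s-1-\frac{2\pi i k}{\log\beta}\Bigr). \]
To justify interchanging the sums over $n$ and $k$ I would first work in the halfplane $\Re(s)>2$: there the argument of each zeta factor has real part exceeding $1$, so $\lvert\zeta(s-1-2\pi i k/\log\beta)\rvert\le\zeta(\Re(s)-1)$ uniformly in $k$, and since $\sum_k\lvert c_\beta(k)\rvert<\infty$ the double series converges absolutely and Fubini's theorem applies. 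This yields the identity
\[ G_\beta(s) = -\frac{\beta-1}{2\log\beta}\zeta'(s-1) + \sum_{k=-\infty}^\infty c_\beta(k)\,\zeta\Bigl(s-1-\frac{2\pi i k}{\log\beta}\Bigr) \]
for $\Re(s)>2$.

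The crux, which I expect to be the main obstacle, is to show that the series on the right defines a meromorphic function on the larger halfplane $\Re(s)>1$. Here I would combine the decay estimate $c_\beta(k)\ll_\beta\lvert k\rvert^{-3/2+\varepsilon}$ established above with the convexity bound $\zeta(\sigma+it)\ll\lvert t\rvert^{(1-\sigma)/2+\varepsilon}$ for $0\le\sigma\le 1$. On a strip $\Re(s)\ge 1+\delta$ the $k$-th zeta factor is evaluated at real part $\Re(s)-1\ge\delta$ and imaginary part of size $\asymp\lvert k\rvert/\log\beta$, so each term is bounded by $\lvert k\rvert^{-3/2+\varepsilon}\lvert k\rvert^{(1-\delta)/2+\varepsilon}=\lvert k\rvert^{-1-\delta/2+2\varepsilon}$, which is summable; note it is precisely the $\sigma$-dependence of the convexity exponent that rescues convergence, since the bound $\lvert t\rvert^{1/2}$ alone would only give the borderline $\lvert k\rvert^{-1+\varepsilon}$. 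To handle the poles I would observe that any compact subset $K$ of $\{\Re(s)>1\}$ contains only finitely many of the points $s=2+2\pi i k/\log\beta$; removing the corresponding finitely many terms leaves a tail that, by the bound just given, converges uniformly on $K$, so the full series is meromorphic on $\Re(s)>1$ with poles precisely at the points $s=2+2\pi i k/\log\beta$.

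Finally I would read off the pole data. Since $\zeta$ has residue $1$ at its pole $w=1$, the $k$-th term contributes a simple pole of residue $c_\beta(k)$ at $s=2+2\pi i k/\log\beta$, giving the stated residues for $k\neq 0$. At $s=2$ the simple pole of residue $c_\beta(0)$ coming from the $k=0$ term combines with the contribution of $-\frac{\beta-1}{2\log\beta}\zeta'(s-1)$; because $\zeta'(w)=-(w-1)^{-2}+O(1)$ has no $(w-1)^{-1}$ term, the latter equals $\frac{\beta-1}{2\log\beta}(s-2)^{-2}+O(1)$, and the two combine to give exactly the claimed Laurent expansion $\frac{\beta-1}{2\log\beta}(s-2)^{-2}+c_\beta(0)(s-2)^{-1}+O(1)$.
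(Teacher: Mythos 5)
Your proposal follows essentially the same route as the paper's own proof: split $S_\beta(n)$ via Delange's formula into the smooth part, which gives $-\frac{\beta-1}{2\log\beta}\zeta'(s-1)$, and the oscillatory part, insert the Fourier series for $h_\beta$, exchange the sums over $n$ and $k$, and continue the resulting series $\sum_{k} c_\beta(k)\,\zeta\bigl(s-1-\tfrac{2\pi i k}{\log\beta}\bigr)$ to $\Re(s)>1$ by uniform convergence on compact sets avoiding the points $s=2+2\pi i k/\log\beta$, then read off the poles and residues term by term. If anything, your handling of the key convergence step is more careful than the paper's: the paper quotes only the bound $\zeta\bigl(s-1-\tfrac{2\pi i k}{\log\beta}\bigr)\ll k^{1/2+\varepsilon}$ for $\Re(s)>1$, which against $c_\beta(k)\ll_\beta k^{-3/2+\varepsilon}$ yields only the borderline, non-summable exponent $k^{-1+2\varepsilon}$, whereas your use of the $\sigma$-dependent convexity exponent $\lvert t\rvert^{(1-\sigma)/2+\varepsilon}$ genuinely closes the argument.
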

\begin{proof}
Using the definition \eqref{eq-sbeta-delange-def} of $S_\beta$, we have
\begin{align}
G_\beta(s)&=\sum_{n=1}^\infty \Biggl(\frac{\beta-1}{2\log \beta} n \log n + h_\beta\biggl(\frac{\log n}{\log \beta} \biggr) n\Biggr)n^{-s}\\
&= -\frac{\beta-1}{2\log \beta} \zeta'(s-1) + \sum_{n=1}^\infty h_\beta\biggl(\frac{\log n}{\log \beta} \biggr) n^{-(s-1)}.
\end{align}
The function $\zeta'(s-1)$ is meromorphic on $\CC$ with only singularity a double pole at $s=2$ with Laurent expansion $\zeta'(s-1)=-(s-1)^{-2} + O(1)$. Using the Fourier series \eqref{eq-hbeta-fourier-series} for $h_\beta$, we have
\begin{align}
\sum_{n=1}^\infty h_\beta\biggl(\frac{\log n}{\log \beta} \biggr) n^{-(s-1)} &= \sum_{n=1}^\infty \sum_{k=-\infty}^\infty c_\beta(k) \exp\biggl(2 \pi i k \frac{\log n}{\log \beta}\biggr) n^{-(s-1)}\\ &= \sum_{n=1}^\infty \sum_{k=-\infty}^\infty c_\beta(k) n^{-(s-1 - 2\pi i k / \log \beta)}.
\end{align}
This double sum is absolutely convergent, so we may exchange the sums, giving
\begin{equation}
\sum_{n=1}^\infty h_\beta\biggl(\frac{\log n}{\log \beta} \biggr) n^{-(s-1)} = \sum_{k=-\infty}^\infty c_\beta(k) \zeta\biggl( s- 1 - \frac{2\pi i k}{\log \beta}\biggr).
\end{equation}
If $\Re(s)>1$, then
\begin{equation}
\zeta\biggl( s- 1 - \frac{2\pi i k}{\log \beta}\biggr) \ll k^{1/2+\varepsilon}
\end{equation}
for any $\varepsilon>0$. On any compact set $K$ in the halfplane $\Re(s)>1$ not containing a point $s=2+2\pi i k / \log \beta)$ for any $k\in\ZZ$, the sum
\begin{equation}
\sum_{k=-\infty}^\infty c_\beta(k) \zeta\biggl( s- 1 - \frac{2\pi i k}{\log \beta}\biggr)
\end{equation}
is uniformly convergent on $K$; if the compact set $K$ contains a point of the form $s=2 + 2\pi i k_0 / \log \beta$, then one term of the sum has a simple pole with residue $c_\beta(k_0)$ while the remaining sum is uniformly convergent.
\end{proof}

When $\beta\geq 2$ is an integer, we know that the function $G_\beta(s)$ has a meromorphic continuation to the entire complex plane.

\begin{quest}
For noninteger $\beta>1$, does the Dirichlet series $G_\beta(s)$ have a meromorphic continuation beyond $\Re(s)>1$?
\end{quest}

\subsection{Meromorphic continuation of $F_\beta(s)$}

We now consider the Dirichlet series
\begin{equation}
F_\beta(s) = \sum_{n=1}^\infty \frac{d_\beta(n)}{n^s}
\end{equation}
for real $\beta>1$. We already know that this series has a meromorphic continuation to $\CC$ when $\beta\geq 2$ is an integer. We show that for each real $\beta>1$, the Dirichlet series $F_\beta(s)$ has a meromorphic continuation to the halfplane $\Re(s)>0$.

\begin{thm}
The function $F_\beta(s)$ has a meromorphic continuation to the halfplane $\Re(s)>0$, with a double pole at $s=1$ with Laurent expansion
\begin{equation}
F_\beta(s) = \frac{\beta - 1}{2\log \beta}(s-1)^{-2} + \biggl(c_\beta(0) +\frac{\beta-1}{2\log \beta}\biggr) (s-1)^{-1} +O(1)
\end{equation}
and simple poles at $s=1+2\pi i k / \log \beta$ for $k\in\ZZ$ with $k\neq 0$ with residue
\begin{equation}
\Res\biggl(F_\beta(s),s=1+\frac{2\pi i k}{\log \beta}\biggr) = \biggl(1+\frac{2\pi i k}{\log \beta}\biggr) c_\beta(k).
\end{equation}
\end{thm}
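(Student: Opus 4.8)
The plan is to obtain the continuation of $F_\beta(s)$ directly from that of $G_\beta(s)$ (the previous theorem) by expressing $F_\beta$ as an explicit series of vertically shifted copies $G_\beta(s+j)$. First I would record the elementary identity tying the two Dirichlet series together. Since $d_\beta(n)=S_\beta(n+1)-S_\beta(n)$ and $S_\beta(n)\asymp n\log n$ gives $d_\beta(n)\ll\log n$, the series $F_\beta(s)$ converges absolutely for $\Re(s)>1$, and a summation by parts applied to $\sum_{n\geq 1}\bigl(S_\beta(n+1)-S_\beta(n)\bigr)n^{-s}$ yields, for $\Re(s)>1$,
\[
F_\beta(s) = \sum_{m=2}^\infty S_\beta(m)\bigl((m-1)^{-s}-m^{-s}\bigr) - S_\beta(1),
\]
the boundary term $S_\beta(N+1)N^{-s}$ tending to $0$ precisely because $\Re(s)>1$. (The constant $S_\beta(1)=h_\beta(0)$ plays no role beyond being a constant.)

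Next I would expand $(m-1)^{-s}-m^{-s}=\sum_{j\geq 1}\binom{s+j-1}{j}m^{-s-j}$ for each $m\geq 2$ and interchange the order of summation, justified by absolute convergence for $\Re(s)>2$. Recognizing $\sum_{m\geq 2}S_\beta(m)m^{-s-j}=G_\beta(s+j)-S_\beta(1)$ then gives the key identity
\[
F_\beta(s) = \sum_{j=1}^\infty \binom{s+j-1}{j}\bigl(G_\beta(s+j)-S_\beta(1)\bigr) - S_\beta(1),
\]
valid initially for $\Re(s)>2$ and to be extended by analytic continuation.

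The heart of the proof is to show that the right-hand side defines a meromorphic function on $\Re(s)>0$ whose poles all come from the single term $j=1$. For $j\geq 2$ and $\Re(s)>0$ we have $\Re(s+j)>2$, so each $G_\beta(s+j)$ lies in the halfplane of absolute convergence and is holomorphic there; for $j=1$, $\Re(s+1)>1$, so $G_\beta(s+1)$ is meromorphic with a double pole at $s=1$ and simple poles at $s=1+2\pi i k/\log\beta$ ($k\neq 0$), all on the line $\Re(s)=1$. To control the tail I would combine the polynomial bound $\bigl|\binom{s+j-1}{j}\bigr|\ll_s j^{\Re(s)-1}$ with the exponential decay $\bigl|G_\beta(s+j)-S_\beta(1)\bigr|=\bigl|\sum_{m\geq 2}S_\beta(m)m^{-s-j}\bigr|\ll_s 2^{-j}$, so that the $j$th term is $\ll_s j^{\Re(s)-1}2^{-j}$ and the series converges locally uniformly on compact subsets of $\Re(s)>0$ away from the poles. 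This exponential-versus-polynomial competition, together with the fact that $j=1$ is the only shift for which $G_\beta(s+j)$ can leave its region of absolute convergence, is exactly what pushes the continuation one unit left of the abscissa $\sigma_a=1$; I expect verifying these estimates and the interchange to be the step requiring the most care.

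Finally I would read off the local behaviour from the $j=1$ term $s\bigl(G_\beta(s+1)-S_\beta(1)\bigr)$, since the summands $-sS_\beta(1)$ and $-S_\beta(1)$ are entire and every $j\geq 2$ term is holomorphic near $\Re(s)=1$. Substituting the Laurent expansion of $G_\beta$ at $s=2$ from the previous theorem, with $w=s+1$ so that $w-2=s-1$, and multiplying by $s=1+(s-1)$ gives
\[
s\,G_\beta(s+1) = \frac{\beta-1}{2\log\beta}(s-1)^{-2} + \Bigl(c_\beta(0)+\frac{\beta-1}{2\log\beta}\Bigr)(s-1)^{-1}+O(1),
\]
which is the asserted double pole at $s=1$. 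At a simple pole $s=1+2\pi i k/\log\beta$ the factor $s$ is holomorphic and nonzero, so the residue is $\bigl(1+\tfrac{2\pi i k}{\log\beta}\bigr)\Res\bigl(G_\beta(w),w=2+\tfrac{2\pi i k}{\log\beta}\bigr)=\bigl(1+\tfrac{2\pi i k}{\log\beta}\bigr)c_\beta(k)$, completing the proof.
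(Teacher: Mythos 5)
Your proof is correct, and it reaches the paper's key relation by a genuinely different route. The paper goes through the power-series--Mellin correspondence of Proposition \ref{prop-ps-ds-relation} (the same tool behind Theorems \ref{thm-db} and \ref{thm-sb}): setting $p(x)=\sum_{n\geq 2}S_\beta(n)x^n$ it uses $\sum_{n\geq 1} d_\beta(n)x^n + S_\beta(1) = (x^{-1}-1)p(x)$, splits $e^x-1 = x + (e^x-1-x)$ inside the integral $\Gamma(s)\bigl(F_\beta(s)+S_\beta(1)\bigr)=\int_0^\infty (e^x-1)p(e^{-x})x^{s-1}\,dx$, and arrives at $F_\beta(s) = sG_\beta(s+1) - (s+1)S_\beta(1) + R(s)$ with $R(s)$ holomorphic on $\Re(s)>0$ because $e^x-1-x\ll x^2$ as $x\rightarrow 0^+$. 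You stay entirely on the Dirichlet-series side: summation by parts plus the binomial expansion $(m-1)^{-s}-m^{-s}=\sum_{j\geq 1}\binom{s+j-1}{j}m^{-s-j}$ give the exact identity $F_\beta(s)=\sum_{j\geq 1}\binom{s+j-1}{j}\bigl(G_\beta(s+j)-S_\beta(1)\bigr)-S_\beta(1)$, whose $j=1$ term $s\bigl(G_\beta(s+1)-S_\beta(1)\bigr)$ carries all the poles in $\Re(s)>0$ and whose $j\geq 2$ tail is holomorphic there, controlled by your $j^{\Re(s)-1}2^{-j}$ estimate (which does hold uniformly on compact subsets of $\Re(s)>0$, as locally uniform convergence requires). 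The two decompositions coincide---your tail is exactly the paper's $R(s)$---and the endgame of multiplying the Laurent data of $G_\beta(s+1)$ by $s=1+(s-1)$ is identical, so the pole and residue computations agree. What each buys: the paper's remainder-integral argument needs only the single crude bound $e^x-1-x\ll x^2$ and keeps the proof uniform with its earlier integer-base arguments, while your expansion is more elementary (no Gamma-function or Mellin machinery) at the price of a more delicate convergence analysis; it also makes the paper's closing remark transparent, since your formula shows that any further meromorphic continuation of $G_\beta$ propagates at once to $F_\beta$ through finitely many meromorphic terms plus a convergent tail.
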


\begin{proof}
Let
\begin{equation}
p(x) = \sum_{n=2}^\infty S_\beta(n)x^n,
\end{equation}
so that
\begin{equation}
\Gamma(s)\bigl(G_\beta(s)-S_\beta(1)\bigr) = \int_0^\infty p(e^{-x}) x^{s-1}\, dx.
\end{equation}
By our definition of $d_\beta(n)$, we have
\begin{equation}
\sum_{n=1}^\infty d_\beta(n) x^n + S_\beta(1)= (x^{-1}-1)p(x).
\end{equation}
Hence by Proposition \ref{prop-ps-ds-relation} we have
\begin{equation}
\Gamma(s)\bigl(F_\beta(s)+S_\beta(1)\bigr) = \int_0^\infty (e^x-1)p(e^{-x})x^{s-1}\, dx
\end{equation}
for $\Re(s)>1$. Using the power series expansion
Then we write
\begin{equation}\label{eq-fbeta-formula-1}
\Gamma(s)\bigl(F_\beta(s)+S_\beta(1)\bigr)  = \Gamma(s+1)\bigl(G_\beta(s+1)-S_\beta(1)\bigr) + \int_0^\infty (e^x-1 - x) p(e^{-x})x^{s-1} \, dx.
\end{equation}
Dividing by $\Gamma(s)$ and rearranging, we obtain
\begin{equation}\label{eq-fbeta-gbeta-relation}
F_\beta(s) = -S_\beta(1)(s+1) + s G_\beta(s+1) + R(s)
\end{equation}
where the remainder term
\begin{equation}
R(s) = \frac{1}{\Gamma(s)} \int_0^\infty (e^x-1 - x) p(e^{-x})x^{s-1} \, dx
\end{equation}
is holomorphic in $\Re(s)>0$ since $e^x-1-x\ll x^2$ as $x\rightarrow 0^+$. Since $G_\beta(s+1)$ is meromorphic in $\Re(s)>0$, we find that $F_\beta(s)$ is meromorphic in $\Re(s)>0$, with poles coming from the poles of $G_\beta(s+1)$. Since
\begin{equation}
sG_\beta(s+1) = (s-1)G_\beta(s+1) + G_\beta(s+1),
\end{equation}
we find that $F_\beta(s)$ has a double pole at $s=1$ with Laurent expansion as given in the theorem. At each other point $s=1+2\pi i m / \log \beta$, $F_\beta(s)$ has a simple pole.
\end{proof}

Meromorphic continuation of $F_\beta(s)$ to a larger halfplane would follow from continuation of $G_\beta(s)$ to a larger halfplane; in particular, by using more terms of the power series for $e^x$ in formula \eqref{eq-fbeta-formula-1}, we find that if $G_\beta(s)$ is meromorphic in $\Re(s)>c$ for some $c$, then $F_\beta(s)$ is meromorphic in $\Re(s)>c-1$.

\section{Acknowledgements}
The author thanks Jeffrey Lagarias for many helpful comments.

\bibliographystyle{amsplain}
\bibliography{sumofdigitsbibliography}

\end{document}